
\documentclass[12pt,twoside]{amsart}
\usepackage[margin=3cm]{geometry}
\usepackage[colorlinks=false]{hyperref}
\usepackage[english]{babel}
\usepackage{graphicx,titling}
\usepackage{float}
\usepackage{amsmath,amsfonts,amssymb,amsthm}
\usepackage{lipsum}
\usepackage[T1]{fontenc}
\usepackage{fourier}
\usepackage{color}
\usepackage[latin1]{inputenc}
\usepackage{esint}
\usepackage{caption}
\usepackage{ccicons}

\makeatletter
\def\blfootnote{\xdef\@thefnmark{}\@footnotetext}
\makeatother

\newcommand\ccnote{
    \blfootnote{\copyright\,\, Nets Hawk Katz, Shukun Wu, and Joshua Zahl}
    \blfootnote{\ccLogo\, \ccAttribution\,\, Licensed under a \href{https://creativecommons.org/licenses/by/4.0/}{Creative Commons Attribution License (CC-BY)}.}
}

\usepackage[export]{adjustbox}
\numberwithin{equation}{section}
\usepackage{setspace}\setstretch{1.05}

\renewcommand{\leq}{\leqslant}

\renewcommand{\geq}{\geqslant}
\renewcommand{\mathbb}{\varmathbb}
\usepackage{fancyhdr}
\pagestyle{fancy}
\fancyhf{}

\fancyhead[LE,RO]{\thepage}

\fancyhead[RE]{N.H. Katz, S. Wu \& J. Zahl}
\fancyhead[LO]{Kakeya sets from lines in $SL_2$}


\usepackage[percent]{overpic}

\newcommand{\RR}{\mathbb{R}}
\newcommand{\CC}{\mathbb{C}}

\newcommand{\eps}{\varepsilon}
\newtheorem{thm}{Theorem}[section]
\newtheorem{lem}[thm]{Lemma}
\newtheorem{prop}[thm]{Proposition}

\newtheorem{conj}[thm]{Conjecture}

\theoremstyle{remark}
\newtheorem{defn}[thm]{Definition}

\newtheorem{rem}[thm]{Remark}

\address{Nets Hawk Katz, Rice University, Department of Mathematics, Houston, TX, USA}
\email{nets@rice.edu}
\medskip
\address{Shukun Wu, Indiana University Bloomington, Department of Mathematics, Bloomington, IN, USA} 
\email{shukwu@iu.edu}
\medskip
\address{Joshua Zahl, The University of British Columbia, Department of Mathematics, Vancouver, BC, Canada}
\email{jzahl@math.ubc.ca}


\begin{document}

\thispagestyle{empty}

\begin{minipage}{0.28\textwidth}
\begin{figure}[H]
\includegraphics[width=2.5cm,height=2.5cm,left]{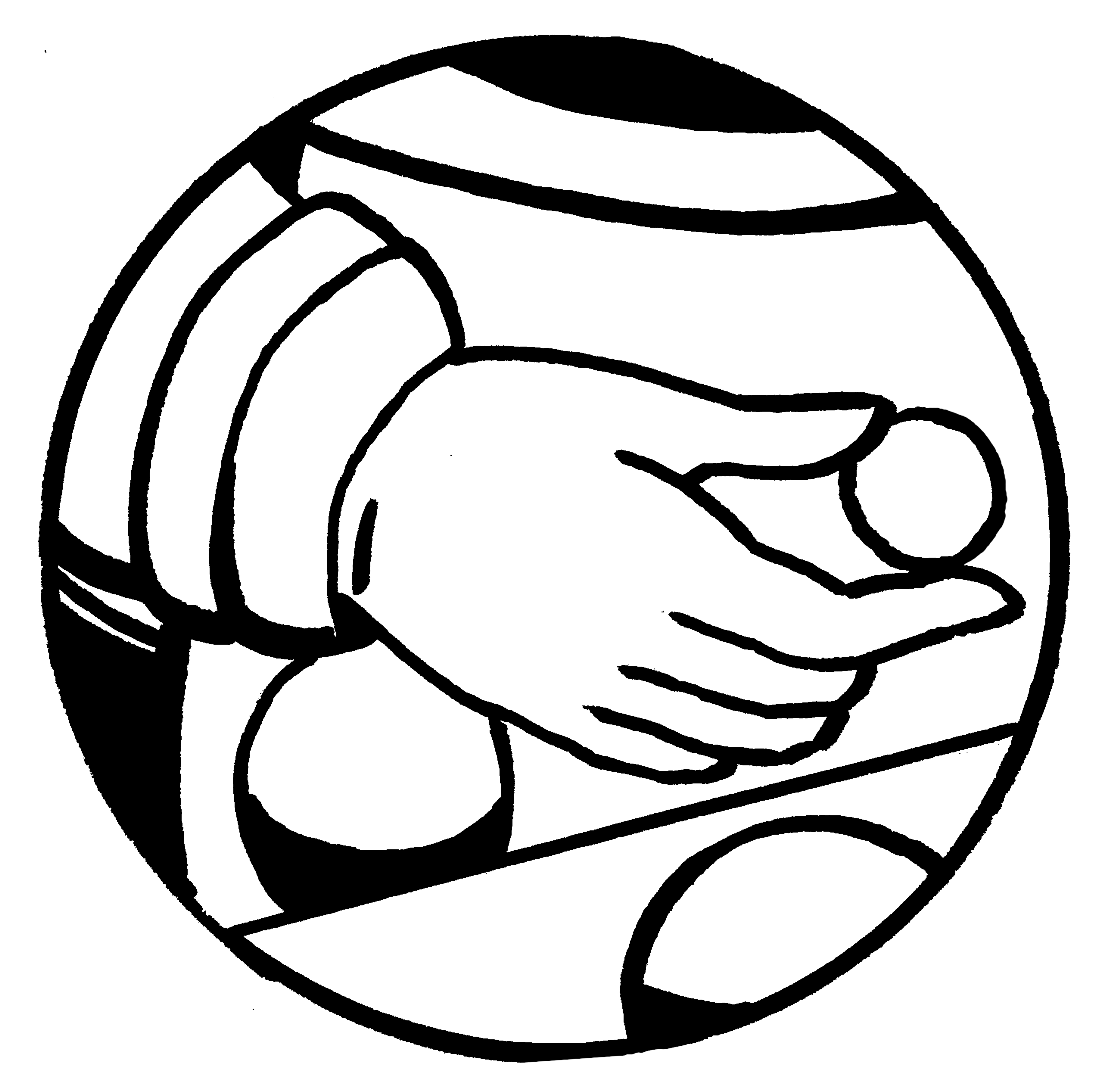}
\end{figure}
\end{minipage}
\begin{minipage}{0.7\textwidth} 
\begin{flushright}
Ars Inveniendi Analytica (2023), Paper No. 6, 23 pp.
\\
DOI 10.15781/ep89-2217
\\
ISSN: 2769-8505
\end{flushright}
\end{minipage}

\ccnote

\vspace{1cm}


\begin{center}
\begin{huge}
\textit{Kakeya sets from lines in $SL_2$}


\end{huge}
\end{center}

\vspace{1cm}


\begin{minipage}[t]{.28\textwidth}
\begin{center}
{\large{\bf{Nets Hawk Katz}}} \\
\vskip0.15cm
\footnotesize{Rice University}
\end{center}
\end{minipage}
\hfill
\noindent
\begin{minipage}[t]{.28\textwidth}
\begin{center}
{\large{\bf{Shukun Wu}}} \\
\vskip0.15cm
\footnotesize{Indiana University Bloomington}
\end{center}
\end{minipage}
\hfill
\noindent
\begin{minipage}[t]{.28\textwidth}
\begin{center}
{\large{\bf{Joshua Zahl}}} \\
\vskip0.15cm
\footnotesize{The University of British Columbia} 
\end{center}
\end{minipage}

\vspace{1cm}


\begin{center}
\noindent \em{Communicated by Larry Guth}
\end{center}
\vspace{1cm}


\noindent \textbf{Abstract.} \textit{We prove that every Kakeya set in $\mathbb{R}^3$ formed from lines of the form $(a,b,0) + \operatorname{span}(c,d,1)$ with $ad-bc=1$ must have Hausdorff dimension $3$; Kakeya sets of this type are called $SL_2$ Kakeya sets. This result was also recently proved by F\"assler and Orponen using different techniques. Our method combines induction on scales with a special structural property of $SL_2$ Kakeya sets, which says that locally such sets look like the pre-image of an arrangement of plane curves above a special type of map from $\mathbb{R}^3$ to $\mathbb{R}^2$, called a twisting projection. This reduces the study of $SL_2$ Kakeya sets to a Kakeya-type problem for plane curves; the latter is analyzed using a variant of Wolff's circular maximal function.}
\vskip0.3cm

\noindent \textbf{Keywords.} Maximal Functions, Kakeya Set, Projection Theory. 
\vspace{0.5cm}


\section{Introduction}\label{introSection}
Let $\mathcal L_{SL_2}$ be the set of lines in $\RR^3$ that can be written in the form $\ell_{(a,b,c,d)}=(a,b,0) + \operatorname{span}(c,d,1)$ with $ad-bc=1$, and let $\mathcal{L}_{SL_2}^*$ be the set of lines in $\mathcal L_{SL_2}$, plus those that can be written in the form $(0,0,t)+\operatorname{span}(c,d,0)$. Recall that a Kakeya set in three dimensions is a compact set $K\subset\RR^3$ that contains a unit line segment pointing in every direction. A $SL_2$ Kakeya set is a compact set $K\subset\RR^3$ that contains a unit line segment in every direction\footnote{except the direction $(0,0,1)$, since no line in $\mathcal L_{SL_2}^*$ points in this direction}, that comes from a line in $\mathcal L_{SL_2}^*$. The Kakeya set conjecture in three dimensions asserts that every Kakeya set in $\RR^3$ has Minkowski and Hausdorff dimension 3. In \cite[Conjecture 1.3]{WZ}, Wang and the third author stated a special case of this conjecture for $SL_2$ Kakeya sets.

\begin{conj}\label{sl2Conj}
Every $SL_2$ Kakeya set in $\RR^3$ has Minkowski and Hausdorff dimension 3. 
\end{conj}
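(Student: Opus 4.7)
The plan is to follow an induction-on-scales strategy, reducing the $SL_2$ Kakeya problem to a planar Kakeya problem for a family of curves amenable to a Wolff-type circular maximal function estimate. First, I would discretize: it suffices to show that for every $\eps > 0$ and every $\delta$-separated family $\mathbb{L} \subset \mathcal{L}_{SL_2}^*$ of $\sim \delta^{-2}$ lines in $\sim \delta^{-2}$ directions contained in a fixed unit ball, the $\delta$-neighborhood of $\bigcup_{\ell \in \mathbb{L}} \ell$ has volume at least $\delta^\eps$ up to constants. A standard two-parameter Frostman-measure argument then upgrades the Minkowski bound to the Hausdorff bound. The induction hypothesis is that such a bound holds at all intermediate scales $\rho \in (\delta, 1)$.

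Next, I would exploit the $SL_2$ structure by building the \emph{twisting projection} promised in the abstract. The defining relation $ad - bc = 1$ means that through each point $p \in \RR^3$ the $SL_2$ lines form a one-parameter family, so on any ball $B$ of radius $\rho$ we can select a smooth $SL_2$-line field and take $\pi_B \colon B \to \RR^2$ to be a smooth submersion whose fibers are integral curves of this field. Lines in $\mathcal{L}_{SL_2}^*$ that are transverse to this foliation project to smooth plane curves $\pi_B(\ell)$, and an explicit computation using $ad - bc = 1$ shows that these curves satisfy a cinematic (Wolff-type) curvature and non-degeneracy condition, with the quantitative transversality inherited from the $SL_2$ constraint.

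Third, within a ball $B$ of radius $\rho$, the intersection of an $SL_2$ Kakeya set with $B$ becomes, up to the twisting projection, a union of $\rho$-neighborhoods of $\sim \rho^{-2}$ plane curves of the type just described. A Wolff-type circular maximal function estimate, obtained by adapting Wolff's $L^3$ incidence-geometric argument to this family of curves (the main technical novelty), then gives a lower bound on the planar area covered by $\pi_B(B \cap K)$, which pulls back to a lower bound on the three-dimensional volume of $B \cap K$. Combining these local lower bounds with the induction hypothesis applied at the coarser scale $\rho$, and optimizing in $\rho$, closes the induction and delivers the desired bound $|N_\delta(K)| \gtrsim \delta^\eps$.

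The main obstacle will be the Wolff-type estimate for the curve family produced by the twisting projection: Wolff's original argument for circles relies on a delicate cell decomposition and a ``two-ends'' reduction, and one must verify that the projected curves have enough curvature-type non-degeneracy, uniformly as the base ball $B$ varies, for the incidence geometry to go through. A secondary subtlety is tracking how the error from the smooth-but-not-affine foliation $\pi_B$ propagates through the induction on scales, and dealing separately with those $SL_2$ lines that are nearly tangent to the foliation on $B$, where the planar reduction degenerates and one must exploit the global $SL_2$ geometry directly.
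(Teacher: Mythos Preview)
Your high-level architecture---induction on scales, a projection $\RR^3\to\RR^2$ adapted to $SL_2$, and a Wolff-type curve estimate---matches the paper, but there is a genuine gap at the step ``pulls back to a lower bound on the three-dimensional volume of $B\cap K$.'' A submersion $\pi_B$ destroys a dimension: a lower bound on the planar area of $\pi_B(K\cap B)$ gives no lower bound on $|K\cap B|$ unless you know that the fibers of $\pi_B$ meet $K$ in segments of definite length, and a priori there is no reason they should. The paper supplies exactly this missing ingredient via \emph{regulus strips}: for each $\ell\in L$ one considers the set $S(\ell)=N_{\sqrt\delta}(\ell)\cap N_\delta(R(\ell))$, where $R(\ell)$ is the regulus swept by the horizontal lines $\hat\ell(p)=p+\operatorname{span}(p_x,p_y,0)$ along $\ell$. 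A Cordoba $L^2$ argument (using that two $SL_2$-lines through a common $\delta$-cube must lie in the plane $\Pi(p)$) shows that a large fraction of $S(\ell)$ is contained in $E_L$. The twisted projection $\pi_{\ell_0}$ is then chosen so that its fibers are precisely the $\hat\ell(p)$; this guarantees that $\pi_{\ell_0}(S(\ell'))$ is comparable to the $\delta$-neighborhood of a degree-two curve, and, crucially, that the $\pi_{\ell_0}$-preimage of that curve neighborhood inside the $\sqrt\delta$-tube \emph{is} $S(\ell')$, so planar area translates back to volume. Your proposed fibers (integral curves of an $SL_2$ line field) would not have this property, and indeed would be nearly tangent to many of the lines you are trying to project.

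A second, smaller gap: the Wolff/PYZ estimate by itself is not strong enough to close the induction in one step, because the lines inside a single $\sqrt\delta$-tube satisfy only a two-dimensional ball condition, not the one-dimensional condition the cinematic estimate needs. The paper handles this with a dichotomy (Proposition~\ref{fatterTubesAlmostFullLem}): either the projected curves are already one-dimensionally spread (Case A, gain directly), or they cluster into balls of some radius $\tau\gg\delta$ (Case B), in which case the regulus-strip argument at scale $\tau$ produces $\tau$-tubes that are nearly full, and one inducts at scale $\tau$. Your sketch treats the Wolff estimate as a black box that immediately gives the planar bound, but you will need this two-case structure (or something equivalent) to make the numerology close.
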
 

Conjecture \ref{sl2Conj} was recently resolved by F\"assler and Orponen \cite{FO}. We give an alternate proof of Conjecture \ref{sl2Conj}. Our main contribution is a single-scale volume bound for the thickened neighborhood of unions of lines. In what follows, we identify each line $\ell_{(a,b,c,d)}\in \mathcal{L}_{SL_2}$ with the corresponding point $(a,b,c,d)\in\RR^4$, and we define the distance $d(\ell,\ell')$ to be the Euclidean distance between the corresponding points.


\begin{thm}\label{mainThm}
For all $\eps>0,\ R>0$, there exists $M=M(\eps)>0$ and $\delta_0=\delta_0(\eps,R)>0$ so that the following holds for all $\delta\in (0,\delta_0]$. Let $L\subset \mathcal{L}_{SL_2}\cap B(0,R)$. Suppose that $L$ satisfies the two-dimensional ball condition 
\begin{equation}\label{ballCondition}
\#(L \cap B) \leq (r/\delta)^2\quad\textrm{for all}\ r\in[\delta,1],\ \textrm{and all balls}\ B\ \textrm{of radius}\ r.
\end{equation}
Let $E\subset B(0,R)$ be a union of $\delta$-cubes, and suppose $|\ell\cap E|\geq \lambda$ for each $\ell\in L$. Then
\begin{equation}\label{mainThmBigVolumeEstimate}
|E|\geq \delta^{\eps} \lambda^{M} (\delta^2\# L).
\end{equation}
\end{thm}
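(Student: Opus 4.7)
The plan is to prove Theorem \ref{mainThm} by induction on scales, combined with the structural reduction highlighted in the abstract: locally, $SL_2$-line arrangements are preimages of plane curve families under a \emph{twisting projection} $\pi : \mathbb{R}^3 \to \mathbb{R}^2$, and the resulting plane-curve Kakeya problem is handled by a Wolff-type circular maximal bound.

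First, fix $\eps>0$ and introduce an intermediate scale $\rho$ with $\delta\ll \rho\ll 1$, for instance $\rho=\delta^{1-\eta}$ for some small $\eta=\eta(\eps)$. The inductive hypothesis is that (\ref{mainThmBigVolumeEstimate}) holds at all scales down to $\rho$ with some exponent $M'<M(\eps)$; the task is to upgrade this to scale $\delta$ at the expense of a controlled increase in the exponent. When $\eps$ is close to $1$ the desired bound is essentially trivial from the ball condition alone, which anchors the induction.

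Second, carry out the structural step. Cover $B(0,R)$ by balls $B$ of radius $\rho$; inside each, use the $SL_2$ condition $ad-bc=1$ to construct a twisting projection $\pi_B$ under which every $\ell\in L$ meeting $B$ projects to a plane curve $\gamma_\ell=\pi_B(\ell)$, the family $\{\gamma_\ell\}$ satisfying a cinematic curvature hypothesis. After rescaling the $\rho$-ball to unit scale, the $\delta$-neighborhood of $\ell\cap B$ is recovered as the $\pi_B$-preimage of a suitable anisotropic (width $(\delta/\rho)^{1/2}$) neighborhood of $\gamma_\ell$, and the 2D ball condition (\ref{ballCondition}) transfers to a non-concentration condition on the curves. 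Applying a variant of Wolff's circular maximal function to $\{\gamma_\ell\}$ yields a lower bound on $|\pi_B(E\cap B)|$, which lifts through $\pi_B$ to a volume bound on $E\cap B$. Summing these bounds over $B$, and using the induction hypothesis at scale $\rho$ to handle the coarse arrangement of $\rho$-fattened lines, should close the estimate.

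\textbf{The main obstacle} is the structural step itself: one must show that inside each $\rho$-ball a twisting projection exists with both the cinematic curvature property and the faithful correspondence between $\delta$-thickened lines and anisotropically-thickened curves, while also checking that the 2D ball condition on $L$ descends to the planar curve family in a form to which a Wolff-type $L^3$ bound applies. A secondary difficulty is calibrating $\rho$ and the induction so that $M(\eps)$ grows controllably and the cumulative loss from iterating $O(\eta^{-1})$ scales remains $\delta^\eps$; this demands that the structural step furnish a genuine geometric gain at each application, rather than merely reshuffling the hypotheses.
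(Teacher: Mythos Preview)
Your high-level strategy---induction on scales, twisting projections to plane curves, and a Wolff-type bound---matches the paper's, but the structural step as you describe it has a genuine gap. The covering should be of the \emph{line space} $\mathcal{L}_{SL_2}$ by balls of radius $t\sim\sqrt{\delta}$ (each such ball corresponding to a $1\times\sqrt\delta\times\sqrt\delta$ tube in $\RR^3$), not of physical space by $\rho$-balls; the twisting projection $\pi_{\ell_0}$ is associated to a \emph{core line} $\ell_0$, and it is only for lines $\ell'$ with $d(\ell_0,\ell')\lesssim\sqrt\delta$ that $\pi_{\ell_0}(\ell')$ is a well-behaved degree-two curve whose $\delta$-neighborhood is meaningful. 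More importantly, the $\delta$-tube around $\ell'$ does \emph{not} project to the $\delta$-neighborhood of $\pi_{\ell_0}(\ell')$---it projects to something much smaller. What does project to $N_\delta(\pi_{\ell_0}(\ell'))$ is the \emph{regulus strip} $S(\ell')=N_{\sqrt\delta}(\ell')\cap N_\delta(R(\ell'))$, a twisted $1\times\sqrt\delta\times\delta$ slab. The missing ``geometric gain'' you correctly anticipate is precisely this: one must first show that a large fraction of each regulus strip $S(\ell')$ lies in $E$, not merely the $\delta$-tube. The paper does this via Cordoba's $L^2$ argument, using that the $SL_2$ structure forces many lines through a typical point of $E$ at quantitatively transverse angles (Lemma~\ref{mostlyFullRegulusStrip}). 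Without this step, the Wolff bound on the projected curves controls the wrong object.

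There is one further ingredient you are missing. Even after the regulus-strip step, applying the Wolff-type bound (Theorem~\ref{PYZThm}) requires the projected curves to satisfy a one-dimensional ball condition, which the two-dimensional ball condition on $L$ does not automatically give. The paper handles this via a dichotomy (Proposition~\ref{fatterTubesAlmostFullLem}): either the curves in a given $\sqrt\delta$-ball of line space are well-spread (and the Wolff bound applies, sending the induction to scale $\sqrt\delta$), or they cluster inside balls of some radius $\tau\gg\delta$ (and one inducts at scale $\tau$ instead). Both branches must be closed, and the exponent $M(\eps)$ is governed by the worst case. Your single choice $\rho=\delta^{1-\eta}$ does not accommodate this branching.
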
 
\noindent Theorem \ref{mainThm} implies Conjecture \ref{sl2Conj} by standard discretization arguments (observe that if $K$ is compact, then for sufficiently large $R$ we have that $K\subset B(0,R)$, and all lines $\ell\in\mathcal{L}_{SL_2}^*$ intersecting $K$ and making angle $\leq 1/2$ with the direction $(0,0,1)$ will be contained in $\mathcal{L}_{SL_2}\cap B(0,R)$).

\medskip

\noindent{\bf Remarks}.\\
1. The Kakeya maximal function conjecture asserts than an estimate of the form \eqref{mainThmBigVolumeEstimate} holds for $M=3$ (for lines pointing in $\delta$-separated directions). If the lines in $L$ are not restricted to $\mathcal{L}_{SL_2}$, then a focusing example shows that $M=3$ is best possible. However, this example cannot be realized using lines from $\mathcal{L}_{SL_2}$; the analogous focusing example with lines from $\mathcal{L}_{SL_2}$ suggests that \eqref{mainThmBigVolumeEstimate} might be true with $M=2$. 

\medskip

\noindent 2. If the lines in $L$ point in $\delta$-separated directions, or satisfy the Wolff axioms, then the ball condition \eqref{ballCondition} is automatically satisfied. The converse is not true---for example, if $L_0$ is the set of lines in $\RR^3$ of the form $(n\delta, 0, 0) + \operatorname{span}(0, m\delta, 0)$ for integers $1\leq n,m\leq\delta^{-1}$, then the lines in $L_0$ satisfy a two-dimensional ball condition, but they do not point in different directions, nor do they satisfy the Wolff axioms. And indeed, no estimate of the form \eqref{mainThmBigVolumeEstimate} is possible for the set $L_0$, since $\bigcup_{L_0}\ell$ is contained in the $\{z=0\}$ plane. This example does not contradict Theorem \ref{mainThm} because the lines in $L_0$ are not contained in $\mathcal{L}_{SL_2}$. The reason we do not require the set of lines $L\subset\mathcal{L}_{SL_2}$ to be direction separated (or to satisfy the Wolff axioms) is because the variety $\mathcal{L}_{SL_2}$ automatically enforces the Wolff axioms---any set $L\subset\mathcal{L}_{SL_2}$ satisfying the ball condition \eqref{ballCondition} must necessarily satisfy the Wolff axioms. It is possible that the analogue of Theorem \ref{mainThm} holds when $\mathcal{L}_{SL_2}$ is replaced by any set $\mathcal{L}\subset\RR^4$ that enforces the Wolff axioms in this fashion.

\subsection{The F\"assler-Orponen proof, and its converse}
As noted above, F\"assler and Orponen recently solved Conjecture \ref{sl2Conj}. An important ingredient in their proof is the following special case of a result by Gan, Guo, Guth, Harris, Maldague, and Wang, \cite{GGGHMW}.
\begin{thm}[GGGHMW]\label{GGGHMWThm}
Let $\gamma(\theta) = \frac{1}{\sqrt 2}(\cos\theta, \sin\theta, 1)$ and for each $\theta\in [0,1]$, let $\pi_\theta$ be the projection to $\gamma(\theta)^\perp$. Let $E\subset\RR^3$ be a Borel set. Then $\dim\pi_\theta(E) = \min(\dim E, 2)$ for a.e.~$\theta\in[0,1]$. 
\end{thm}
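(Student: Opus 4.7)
The plan is to prove this via a Frostman/energy reduction combined with a decoupling inequality that exploits the curvature of $\gamma$. The crucial geometric fact is that $\gamma$ is a non-degenerate curve on $S^2$: a direct computation of $\det[\gamma(\theta),\gamma'(\theta),\gamma''(\theta)]$ yields $\tfrac{1}{2\sqrt{2}}\ne 0$, so the one-parameter family of normal planes $\{\gamma(\theta)^\perp\}$ twists sufficiently as $\theta$ varies. Since $\pi_\theta$ projects onto a $2$-plane, the bound $\dim\pi_\theta(E)\le\min(\dim E,2)$ is automatic; all the content lies in the lower bound.

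By Frostman's lemma and potential theory (Mattila), it suffices to show that for each $s<\min(\dim E,2)$ and every Borel probability measure $\mu$ supported on $E$ with finite $s$-energy, the pushforward $\pi_{\theta\sharp}\mu$ has finite $s$-energy for almost every $\theta\in[0,1]$. A Plancherel calculation reduces this to the averaged Fourier bound
\[
\int_0^1 \int_{\gamma(\theta)^\perp} |\hat\mu(\xi)|^2 |\xi|^{s-2}\, d\xi\, d\theta < \infty,
\]
which amounts to controlling $\hat\mu$ restricted to the one-parameter family of planes $\gamma(\theta)^\perp$. Geometrically, as $\theta$ varies these planes sweep out the complement of the circular cone $\xi_1^2+\xi_2^2=\xi_3^2$, and the Jacobian weight arising from the change of variables $(\theta,\xi)\mapsto\xi$ becomes singular on the cone.

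A direct Marstrand-style calculation (using only that the direction set $\gamma([0,1])$ is one-dimensional) yields only $\dim\pi_\theta(E)\ge\min(\dim E,1)$, which is not enough: the naive integral $\int_0^1|\pi_\theta(v)|^{-s}\,d\theta$ blows up for $s>1$. To recover the sharp exponent $2$, I would invoke a decoupling inequality adapted to this cone --- the Bourgain--Demeter $\ell^2$ decoupling, or one of the sharper small-cap variants of Demeter--Guth--Wang and Guth--Wang--Zhang. The non-degeneracy of $\gamma$ is precisely what makes such decoupling available, and it converts the one-parameter angular averaging into a square-function $L^p$ orthogonality statement that defeats the $\min(\dim E,1)$ barrier.

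The main obstacle is then assembling these pieces into an a.e.\ statement at the correct dimension. Decoupling is a single-scale inequality with an $\eps$-loss, so one must apply it dyadically at scales $\delta_k=2^{-k}$, combine the single-scale conclusions via a Borel--Cantelli argument, and simultaneously track the non-concentration of $\mu$ at every intermediate scale so that the $\eps$-losses do not accumulate into a polynomial loss. This multi-scale synthesis of decoupling, Frostman regularization, and fractal geometry is where the real technical work resides; the geometry (non-degeneracy of $\gamma$) and the high-level strategy (Frostman + Fourier + decoupling) are comparatively routine.
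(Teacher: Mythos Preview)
Your proposal sketches the original GGGHMW strategy---Frostman reduction, Fourier side, then decoupling for the cone swept out by the planes $\gamma(\theta)^\perp$---and this is a valid route to the theorem. But it is \emph{not} the route this paper takes. The paper's proof is Fourier-free: it first establishes Theorem~\ref{mainThm}, the single-scale volume bound for $SL_2$ Kakeya sets, using twisted projections, regulus strips, and a Wolff-type circular maximal function estimate (Theorem~\ref{PYZThm}) coming from incidence geometry. It then \emph{reverses} the F\"assler--Orponen argument: the identity \eqref{AthetaTheta}, $(A_\theta(\pi_\theta(p)),\theta)=\ell^p\cap H_\theta$, converts a point $p$ and an angle $\theta$ into the intersection of an $SL_2$ line $\ell^p$ with a horizontal plane, so a lower bound on $|\pi_\theta(E)|$ becomes a lower bound on the intersection of an $SL_2$ Kakeya set with a horizontal slice. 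Theorem~\ref{mainThm} plus Fubini then gives Proposition~\ref{projMomentCurve}, which implies Theorem~\ref{GGGHMWThm} by standard discretization.

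The trade-offs: your decoupling approach is the more direct attack on the projection problem and extends naturally to any non-degenerate curve $\gamma$, but it requires the full Fourier-analytic machinery and the multi-scale synthesis you flag as the hard part. The paper's approach buys a proof with no harmonic analysis at all---everything is geometric/combinatorial---and it yields the $SL_2$ Kakeya bound as the primary theorem, with the projection result as a corollary. The price is that the argument is tied to the specific $SL_2$ structure (the twisted projections and regulus strips of Section~2 have no obvious analogue for a general curve $\gamma$).
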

The authors in \cite{GGGHMW} proved a more general result that resolved a conjecture of F\"assler and Orponen \cite{FA2} concerning the behavior of orthogonal projections in a restricted set of directions. However, the version of Theorem \ref{GGGHMWThm} stated above was sufficient for F\"assler and Orponen to prove Conjecture \ref{sl2Conj}. Theorem \ref{GGGHMWThm} was proved using decoupling. 

In Section \ref{reversingFOSec}, we show that F\"assler and Orponen's proof can be reversed, and hence Theorem \ref{mainThm} implies Theorem \ref{GGGHMWThm}. This yields a new proof of Theorem \ref{GGGHMWThm} that does not use Fourier analysis---our proof of Theorem \ref{mainThm} uses bounds on a variant \cite{PYZ} of Wolff's circular maximal function, and these bounds in turn are obtained using tools from incidence geometry.


\subsection{The $SL_2$ Kakeya conjecture and the Kakeya set conjecture}
In this section we will discuss the motivation for Conjecture \ref{sl2Conj}, and explain how it relates to the Kakeya conjecture. 
\subsubsection{Kakeya sets and near misses}
One reason the Kakeya conjecture is difficult is that there are objects that have some properties in common with Kakeya sets (such as containing many unit line segments) that have Hausdorff dimension (or some analogue thereof) less than 3. If such an object has dimension $d<3$, then we will call it a \emph{near miss at dimension $d$} for the Kakeya conjecture. 

Two near misses for the Kakeya conjecture are the \emph{Heisenberg group} example $\mathbb{H}$ from \cite{KLT} and the $SL_2$ example $\mathfrak{S}$ from \cite{KZ}; both of these are near misses at dimension $5/2$. Neither of these objects is a literal counter-example to the Kakeya conjecture, because the $\mathbb{H}$ is a subset of $\CC^3$, while $\mathfrak{S}$ is a subset of $( F_p[x]/(x^2) )^3$ (these objects also fail to contain a line pointing in every direction, but they do satisfy a slightly weaker condition known as the Wolff axioms). In \cite{KZ}, the first and third authors showed that any (hypothetical) Kakeya set with Hausdorff dimension $5/2$ must closely resemble either $\mathbb{H}$ of $\mathfrak{S}$; informally, this means that the Heisenberg group example and the $SL_2$ example are the only near misses at dimension $5/2$. They then showed that neither of these examples could be realized in $\RR^3$ at dimension $5/2$. The conclusion was that every Kakeya set in $\RR^3$ must have Hausdorff dimension at least $5/2+c$, for some small constant $c>0$. 

The analysis in \cite{KZ} left open the possibility that an object like the $SL_2$ example could be realized in $\RR^3$ at some dimension $5/2+c \leq d < 3$. Conjecture \ref{sl2Conj} asserts that this cannot happen.

\subsubsection{$SL_2$ Kakeya sets and plaininess}
A defining feature of $\mathcal L_{SL_2}^*$ is that for each point $p=(p_x,p_y,p_z) \in \RR^3$, the plane $\Pi(p)=p+(p_y,-p_x,1)^\perp$ has the following property.
\begin{equation}\label{planeDistribution}
\{\ell \in \mathcal{L}_{SL_2}^*\colon p\in \ell\} = \{\ell\subset \Pi(p)\colon p\in \ell\}.
\end{equation}
There is also a partial converse to this statement: Let $\mathcal{L}$ be a set of lines in $\RR^3.$ Suppose that for each point $p\in\RR^3$ there is a plane $H(p)$ containing $p$, so that \eqref{planeDistribution} is satisfied with $\mathcal{L}$ in place of $\mathcal{L}_{SL_2}^*$, and $H(p)$ in place of $\Pi(p)$. Then either $\mathcal{L}$ arises from a foliation of $\RR^3$ by parallel planes, or $\mathcal{L}$ is the image of $\mathcal{L}_{SL_2}^*$ after applying a projective transformation to $\RR^3$.

A Kakeya set where all the lines passing through each point $p\in K$ are coplanar is called a ``plany Kakeya set,''  and \eqref{planeDistribution} says that every $SL_2$ Kakeya set is automatically a plany Kakeya set. Plany Kakeya sets play an important role in the Kakeya set conjecture because one of the consequences of the Bennett-Carbery-Tao multilinear Kakeya theorem \cite{BCT} is that a (hypothetical) Kakeya set $K\subset\RR^3$ with Hausdorff dimension less than 3 must exhibit a weak form of planiness. While $SL_2$ Kakeya sets are not the only type of plany Kakeya sets, they are a useful model for plany Kakeya sets, and some of the defining features of $SL_2$ Kakeya sets (such as smoothness of the plane map $p\mapsto \Pi(p)$ and the relationship between lines in $SL_2$ Kakeya sets and reguli) have proved useful when exploring the Kakeya set conjecture \cite{KZ, WZ}.

\subsection{A sketch of the proof}
We will briefly describe a few key geometric features of $SL_2$ Kakeya sets, and explain how these properties lead to the proof of Theorem \ref{mainThm}. To simplify notation in what follows, we begin by fixing the number $R\geq 1$ from the statement of Theorem \ref{mainThm}, and we will work in the metric space $B(0,R)\subset\RR^3$ (and occasionally $B(0,R)\subset\RR^2$) with the usual Euclidean metric. In particular, a ``line'' $\ell$ refers to the restriction of $\ell$ to $B(0,R)$. We also abuse notation and replace $\mathcal{L}_{SL_2}$ by $\mathcal{L}_{SL_2}\cap B(0,R)$

Let $L$ be a $\delta$-separated subset of $\mathcal L_{SL_2}$ and let $E$ be a union of grid-aligned $\delta$-cubes that contains $\bigcup_{\ell\in L}\ell$. For this proof sketch, we will suppose that each $\delta$-cube $Q\subset E$ is intersected by at least two lines from $L$ that point in $\sim 1$ separated directions (for example, we exclude the situation where all of the lines intersecting a $\delta$-cube made small angle with a common vector $v$). 

\subsubsection{Regulus strips}
For each $p\in \RR^3\backslash\{x=y=0\}$, define $\hat\ell(p) = p+\operatorname{span}(p_x, p_y, 0)$. Observe that $\hat\ell(p)$ is contained in $\Pi(p)$ (though $\hat\ell(p)$ is not contained in $\mathcal{L}_{SL_2}$). For each line $\ell \in \mathcal L_{SL_2}$, define
\begin{equation}\label{RellDefn}
R(\ell) = \bigcup_{p\in \ell}\hat\ell(p).
\end{equation}
$R(\ell)$ is a doubly-ruled surface (specifically, a hyperbolic paraboloid), and the union \eqref{RellDefn} illustrates one of its rulings. Define 
\begin{equation}
S(\ell) = N_{\sqrt\delta}(\ell)\ \cap N_\delta (R(\ell) ).
\end{equation}
We will call $S(\ell)$ a \emph{regulus strip}. Regulus strips played an important role in the analysis of Kakeya sets with Hausdorff dimension close to $5/2$ by the first and third authors in \cite{KZ}. It was in this context that questions related to $SL_2$ Kakeya sets were first investigated. 

The set $S(\ell)$ has another interpretation. If we identify $\RR^3$ with the first Heisenberg group, then $S(\ell)$ is comparable to the $\sqrt\delta$ neighborhood of $\ell$ in the Kor\'anyi metric. An key observation about $SL_2$ Kakeya sets is that for each line $\ell\in L$, a substantial fraction of $S(\ell)$ is contained in $E$. Phrased differently: for each $\ell\in L$, the (Euclidean) $\delta$-neighborhood of $\ell$ is contained in $E$ by construction. But more is in fact true: a substantial fraction the (Kor\'anyi) $\sqrt\delta$-neighborhood of $\ell$ is also contained in $E$. This observation follows from Cordoba's $L^2$ Kakeya argument \cite{cordoba}, and a precise version is given in Section \ref{regStripLargeIntersectionEl}.

\subsubsection{Twisted projections and circular maximal functions}
For $\ell_{(a,b,c,d)}\in \mathcal{L}_{SL_2}$, we define the twisted projection $\pi_\ell\colon\RR^3\to\RR^2$ with \emph{core-line} $\ell$ by
\begin{equation}\label{twistedProjectionEll}
\pi_{\ell}(x,y,z)=\big(z,\  (a,b,c,d)\cdot(y, -x, yz, -xz)\big).
\end{equation}
$\pi_{\ell}(\ell)$ is the $x$-axis in $\RR^2$, and for $\ell'\in\mathcal{L}_{SL_2}$, $\pi_{\ell}(\ell')$ is the graph of a degree-two polynomial $f$. For $\ell$ fixed, this allows us to identify each line $\ell'\in \mathcal{L}_{SL_2}$ with a degree-two polynomial, and this map is bilipschitz. Moreover, if $d(\ell,\ell')\leq\sqrt\delta$, then $\pi_{\ell}(S(\ell))$ is comparable to the $\delta$-neighborhood of $\pi_\ell(\ell')$. 

This suggests that we should examine the set of lines in $L\cap N(\ell,\sqrt\delta)$; each such line $\ell'$ has a regulus strip $S(\ell')$, with $|S(\ell)\cap E|$ almost as large as $|S(\ell)|$, and each of these regulus strips projects to the $\delta$-neighborhood of a plane curve. By analyzing these plane curves, we can estimate the volume of $E\cap N_{\sqrt\delta}(\ell)$. To do this, we will use a variant of the Wolff circular maximal function \cite{Wo}. Informally, our result is as follows. 

\begin{prop}[A dichotomy for unions of plane curves, informal version]\label{informalProp}$\phantom{1}$\\
Let $\mathcal{C}$ be a set of degree-two plane curves. Then after replacing $\mathcal{C}$ by a large subset, at least one of the following must hold. 
\begin{itemize}
    \item[(A)] The $\delta$-neighborhoods of the curves in $\mathcal{C}$ are almost disjoint.
    \item[(B)] There is a scale $\rho >\!\!> \delta,$ so that for each curve $c_0\in \mathcal{C}$, the $\rho$-neighborhood of $c_0$ has large intersection with $\bigcup_{c\in\mathcal{C}}N_\delta(c)$.
\end{itemize}
\end{prop}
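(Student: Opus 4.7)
The plan is a multi-scale dyadic pigeonhole on the overlap multiplicity of the $\delta$-neighborhoods, combined with the \cite{PYZ} variant of Wolff's circular maximal function to force a clustering scale $\rho\gg\delta$. Let $E=\bigcup_{c\in\mathcal{C}}N_\delta(c)$ and let $\mu(x)=\#\{c\in\mathcal{C}:x\in N_\delta(c)\}$ be the overlap multiplicity on $E$. A standard dyadic pigeonhole on the level sets of $\mu$, followed by a second pigeonhole on how much each curve contributes, produces a refined family $\mathcal{C}_1\subset\mathcal{C}$ of size at least $(\log\delta^{-1})^{-O(1)}\#\mathcal{C}$, a subset $E_1\subset E$, and a dyadic value $\mu_1\in[1,\#\mathcal{C}]$ such that $\mu\sim\mu_1$ on $E_1$ and $|N_\delta(c)\cap E_1|\gtrsim\delta$ for every $c\in\mathcal{C}_1$. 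Equivalently $\mu_1\cdot|E_1|\sim\#\mathcal{C}_1\cdot\delta$, so $\mu_1$ precisely quantifies the failure of disjointness.

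The dichotomy now splits on the size of $\mu_1$. If $\mu_1\sim 1$, then $|E_1|\sim\#\mathcal{C}_1\cdot\delta$, so the $\delta$-neighborhoods of the curves in $\mathcal{C}_1$ are essentially pairwise disjoint, giving conclusion (A) after replacing $\mathcal{C}$ by $\mathcal{C}_1$. Otherwise, assume $\mu_1\gg 1$. For each $c_0\in\mathcal{C}_1$ and dyadic $\rho\in[\delta,1]$, set $F(c_0,\rho)=|N_\rho(c_0)\cap E_1|$, so that $F(c_0,\rho)/\rho$ is the density of $E_1$ in the $\rho$-neighborhood of $c_0$ and satisfies $F(c_0,\delta)/\delta\gtrsim 1$. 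I would fix a threshold $\eta=\eta(\mu_1)>0$ and define $\rho(c_0)$ to be the largest dyadic scale with $F(c_0,\rho(c_0))\geq\eta\rho(c_0)$. A final dyadic pigeonhole over $c_0\in\mathcal{C}_1$ extracts a single scale $\rho$ and a sub-family $\mathcal{C}_2\subset\mathcal{C}_1$ of proportional size with $\rho(c_0)=\rho$ throughout; the pair $(\mathcal{C}_2,\rho)$ then witnesses conclusion (B).

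The main obstacle is ensuring that $\rho(c_0)\gg\delta$ for a positive fraction of $\mathcal{C}_1$ when $\mu_1\gg 1$; otherwise the output $\rho$ collapses to $\delta$ and the conclusion becomes vacuous. This is precisely where the PYZ tangential incidence bound enters. High multiplicity on $E_1$ forces many curves to meet tangentially through common points, and the \cite{PYZ} bound (applied at intermediate scales $\rho$, with $\rho$ playing the role of $\delta$) controls how many such near-tangencies can coexist, forcing the participating curves to concentrate in a small region of curve-parameter space and producing a natural scale at which the clustering must become visible. Calibrating the threshold $\eta$ against $\mu_1$, and translating single-point multiplicity data into a uniform curve-parameter clustering scale, constitutes the quantitatively delicate part of the argument; the remaining dyadic pigeonholes absorb only logarithmic losses in $\delta$.
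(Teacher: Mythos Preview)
Your approach diverges from the paper's in a fundamental way, and the divergence creates a real gap in Case (B).

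The paper never pigeonholes on physical-space multiplicity. Instead it works entirely in \emph{parameter space}: each degree-two curve is a point $(A,B,C)\in\RR^3$, and the dichotomy is produced by a purely combinatorial decomposition of this point set (Lemma~\ref{oneDimDecompositionLem}). One greedily peels off balls $B(x_i,r_i)$ in parameter space that violate a one-dimensional ball condition with threshold $K$; what remains is a set $Y$ obeying the condition, and what is removed is a union of clusters $Z_i$, each contained in a ball of radius $r_i\gtrsim \delta K^{1/d}$ and each containing a well-spread subset $Z_i'$ of size $\sim r_i/\delta$. Case (A) is ``$Y$ is at least half'': then PYZ (Theorem~\ref{PYZThm}) applied to $Y$ gives the volume lower bound directly. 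Case (B) is ``$\bigcup Z_i$ is at least half'': pigeonhole on $r_i$ to get a common scale $\tau$, and within each cluster apply PYZ to the well-spread subset $Z_i'$ to show the $\tau$-tube around the cluster center is densely filled. The scale $\rho$ in Conclusion~(B) is the cluster radius in parameter space, which translates to physical space because the map $(A,B,C)\mapsto\operatorname{graph}(Ax^2+Bx+C)$ is bilipschitz (Lemma~\ref{preserveSpacing}).

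Your route---pigeonhole on the physical multiplicity $\mu$, declare Case (A) when $\mu_1\sim 1$, and in the high-multiplicity case try to manufacture $\rho(c_0)$ from the density profile $\rho\mapsto |N_\rho(c_0)\cap E_1|$---leaves the hard step unproved. You correctly identify the obstacle: why should $\rho(c_0)\gg\delta$? Your proposed mechanism (``PYZ at intermediate scales forces parameter-space concentration'') is not a proof. The contrapositive of PYZ says only that if the union is small then the \emph{one-dimensional ball condition fails}, i.e.\ \emph{some} ball in parameter space is overcrowded. It does not say that a definite fraction of curves lie in such balls, nor that the overcrowded balls have a common radius, nor that for each surviving $c_0$ the physical $\rho$-tube is dense. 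Converting the existential failure of the ball condition into the structured output you need is exactly the content of the iterative greedy decomposition in Lemma~\ref{oneDimDecompositionLem}, which you do not invoke. And even after clustering is established, one must apply PYZ \emph{inside} each cluster (to the well-spread subset $Z_i'$) to show the $\rho$-tube is filled; your sketch omits this second application as well. In short: the paper uses PYZ twice, once in each branch, with a parameter-space decomposition doing the splitting; your sketch tries to let a multiplicity pigeonhole do the splitting and then call PYZ once, but the bridge from ``$\mu_1\gg1$'' to ``uniform clustering scale $\rho\gg\delta$'' is missing.
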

\noindent A more precise version of Proposition \ref{informalProp} appears in Section \ref{PYZSection}.

Using Proposition \ref{informalProp} and the geometric observations discussed above, we can prove Theorem \ref{mainThm} by induction on scale. We first cover $L$ by balls of radius $\delta^{1/2}$; each ball corresponds to a $1\times\sqrt\delta\times\sqrt\delta$ tube in $\RR^3$. We apply the twisted projection \eqref{twistedProjectionEll} to the lines inside each tube, using the center of the tube as the core-line; this gives us a set $\mathcal{C}$ of plane curves. Each plane curve $c\in \mathcal{C}$ corresponds to a line, whose associated regulus strip has large intersection with $E$. We apply Proposition \ref{informalProp} to the curves from $\mathcal{C}$. 

Suppose that Conclusion (A) from Proposition \ref{informalProp} holds. Then the regulus strips described above are almost disjoint, and since each regulus strip has large intersection with $E$, this gives us a favorable lower bound on the volume of $E$ inside a typical tube of diameter $\sqrt\delta$. We have moved from scale $\delta$ to scale $\sqrt\delta$; we can now apply induction on scales. 

Suppose instead that Conclusion (B) from Proposition \ref{informalProp} holds. Then the regulus strips described above cluster tightly. But this means that for a typical line $\ell\in L$, the $\rho$-neighbor\-hood of $\ell$ has large (i.e.~almost full) intersection with $E$. We have moved from scale $\delta$ to scale $\rho>\!\!>\delta$; we can now apply induction on scales. 

We encounter several technical difficulties when executing the above strategy. Chief among them is the issue that when we move from scale $\delta$ to a coarser scale, we obtain a new collection of lines $\tilde L,$ and a coarser version of our Kakeya set, which we denote by $\tilde E.$ We originally had $|\ell\cap E|\geq\lambda$ for all lines $\ell$, but at a coarser scale, the quantity $|\tilde\ell\cap\tilde E|$ might have decreased. We need to carefully control how much is lost at each step of the induction.

\subsection{Thanks}
The authors would like to thank Katrin F\"assler and Tuomas Orponen for comments and suggestions on an earlier version of this manuscript, and the anonymous referee for his or her careful reading and corrections. Nets Hawk Katz was supported by a Simons Investigator Award. Joshua Zahl was supported by a NSERC Discovery Grant. 

\section{Geometric Lemmas}
In this section we will explore some of the geometric properties of $\mathcal{L}_{SL_2}$ that were described in Section \ref{introSection}. We maintain the convention of working in the metric space $B(0,R)\subset\RR^3$ (or occasionally $B(0,R)\subset\RR^2$), and we restrict attention to the subset of $\mathcal{L}_{SL_2}$ contained in $B(0,R)$. 

We write $X\lesssim Y$ (equivalently, $Y\gtrsim X$ or $X = O(Y)$ or $Y=\Omega(X)$) to mean that there is a constant $C$ (which is allowed to depend on $R$) so that $X\leq CY$. In the arguments that follow, we will frequently use dyadic pigeonholing on some set $A$ to extract a subset $A'\subset A$ with $\#A'\gtrsim|\log\delta|^{-1}\#A$, so that all elements of $A'$ share a common statistic (for example, they might all have comparable radii, up to a factor of two). To simplify notation, we will write $X\lessapprox Y$ (or equivalently $Y\gtrapprox X$ or $X=O^*(Y)$ or $Y = \Omega^*(X)$) to mean there is an absolute constant $C$ (in practice $C\leq 10$ will suffice) so that $X\lesssim |\log\delta|^C Y$. If there is risk of ambiguity, we will write $\lessapprox_\delta$ to emphasize the role of the variable $\delta$. Finally, we use $N_t(X)$ to denote the $t$-neighborhood of the set $X$.

\subsection{Twisted projections and regulus strips}

We begin by exploring some of the properties of the twisted projection map \eqref{twistedProjectionEll}. Let $\ell=\ell_{(a,b,c,d)}\in\mathcal{L}_{SL_2}$. Observe that for each point $p=p(t) = (a+ct, b+dt, t)\in \ell$, the image of the line $\hat\ell(p) = (a+ct, b+dt, t)+\operatorname{span}(a+ct, b+dt, 0)$ under $\pi_{\ell}$ is the point $\pi_{\ell}(p)$ 
\[
\pi_{\ell}\big(\hat\ell(p) \big)=\pi_{\ell}(p)=(p_z,0)\quad\textrm{for all}\ \ell\in \mathcal{L}_{SL_2},\ p\in \ell.
\]

This allows us to partition the neighborhood of $\ell$ into (slightly distorted) thin rectangular prisms, so that the image of each prism under $\pi_{\ell}$ is a thin rectangle. The precise statement is as follows. 

\begin{defn}
Let $\ell\in\mathcal{L}_{SL_2}$ and let $0<\delta\leq t \leq\delta^{1/2}\leq 1$. A \emph{prism decomposition of $N_t(\ell)$ into prisms of thickness $\delta$} is a cover of $N_t(\ell)$ by $O(1)$-overlapping parallelepipeds $P\subset N_{2t}\ell$, which have dimensions $\delta/t \times t \times \delta$ in the directions $v_1,v_2,v_3$. The vectors $v_1,v_2,v_3$ determined as follows. Let $p=(p_x,p_y,p_z)$ be the center of $P$. 
\begin{itemize}
\item $v_1\cdot(p_y, -p_x, 1)=0$, and $v_1$ makes angle $\leq t$ with the direction of $\ell$. 
\item $v_2$ is parallel to $(p_x, p_y, 0)$ (this is well-defined, since $\ell\in \mathcal{L}_{SL_2}\cap B(0,R)$ forces the latter vector to be non-zero, provided $t$ is sufficiently small). 
\item $v_3$ is parallel to $(p_y, -p_x, 1)$.
\end{itemize}
\end{defn}

It is straightforward to verify that for $0<\delta\leq t \leq\delta^{1/2}\leq 1$, a prism decomposition of $N_{t}(\ell)$ always exists.

\begin{lem}\label{projectionOfPrism}
Let $\ell\in\mathcal{L}_{SL_2}$, let $0<\delta\leq t \leq\delta^{1/2}\leq 1$, and let $\mathcal{P}$ be a prism decomposition of $N_{t}(\ell)$. Then $\{\pi_{\ell}(P)\colon P\in\mathcal{P}\}$ is a $O(1)$-overlapping collection of sets, each of which is comparable to a rectangle of dimensions $\delta/ t \times\delta$. The union of these sets cover the $t$ neighborhood of the $x$-axis, and are contained in the $2t$ neighborhood of the $x$-axis. 
\end{lem}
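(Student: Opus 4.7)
The strategy is to exploit that $\pi_\ell$ is a polynomial of degree at most two in $(x,y,z)$, so for any prism $P$ with center $p$ and any $q = p + h \in P$ with $h = s_1 v_1 + s_2 v_2 + s_3 v_3$, the Taylor expansion is exact:
\[
\pi_\ell(p+h) = \pi_\ell(p) + d\pi_\ell|_p\, h + \tfrac{1}{2} d^2\pi_\ell(h,h).
\]
The plan is to show that the linear image spans a parallelogram comparable to a $(\delta/t) \times \delta$ rectangle, and that the constant Hessian contributes an error of only $O(\delta)$. I would first compute
\[
d\pi_\ell|_q = \begin{pmatrix} 0 & 0 & 1 \\ -(b+dq_z) & a+cq_z & cq_y - dq_x \end{pmatrix},
\]
which at a point on $\ell$ simplifies (using $ad-bc=1$, so that $cq_y - dq_x = -1$) to $\big(\begin{smallmatrix} 0 & 0 & 1 \\ -q_y & q_x & -1 \end{smallmatrix}\big)$. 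Since the center $p$ lies within $O(t)$ of $\ell$, the entries at $p$ are perturbed by $O(t)$. Evaluating on the frame: $v_1$ differs from the unit direction of $\ell$ by angle at most $t$, and the direction of $\ell$ is sent to $(1,0) + O(t)$, so $(\delta/t) v_1$ contributes $(\delta/t, O(\delta))$; the vector $v_2$ is exactly killed on $\ell$ and $d\pi_\ell|_p v_2 = (0, O(t))$, so $t\, v_2$ contributes $(0, O(t^2)) = (0, O(\delta))$ thanks to $t^2 \leq \delta$; and $v_3$ maps to a vector $(O(1), -(1+p_x^2+p_y^2)/|v_3|) + O(t)$ of magnitude $\Theta(1)$ transverse to the horizontal direction, so $\delta\, v_3$ contributes a vector of magnitude $\Theta(\delta)$ at a $\Theta(1)$ angle to the $v_1$-image. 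Combined, the two essential images span a parallelogram comparable to a $(\delta/t) \times \delta$ rectangle.

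The crucial estimate is on the quadratic term $d^2\pi_\ell(h,h) = (0,\, 2 h_z(c h_y - d h_x))$. A naive bound $|h| \lesssim \delta/t$ would give only $(\delta/t)^2$, which is too large. The refinement is to expand $c h_y - d h_x$ in the frame: the $s_1$ coefficient is $c(v_1)_y - d(v_1)_x = O(t)$ because on the direction $(c,d,1)$ of $\ell$ the expression $cd-dc$ vanishes exactly; the $s_2$ and $s_3$ coefficients are $O(1)$ but $|s_2| \leq t$ and $|s_3| \leq \delta$. This gives $|c h_y - d h_x| \lesssim (\delta/t)\cdot t + t + \delta \lesssim t$, while $|h_z| \lesssim |s_1|+|s_3| \lesssim \delta/t$, so the Hessian error is at most $O((\delta/t)\cdot t) = O(\delta)$, matching the thin direction of the projected rectangle.

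For the $O(1)$-overlapping claim, I would use that each fiber of $\pi_\ell$ at height $z=z_0$ is a line in direction $(p_x, p_y, 0) \parallel v_2$, and restricted to $N_{2t}(\ell)$ has length $O(t)$. A prism $P$ contains a given point $q$ in its projection iff the fiber through $q$ meets $P$; viewed in a $v_2$-transverse cross-section the fiber becomes a single point and the prisms become the $(\delta/t)\times\delta$ rectangles that tile the cross-section with $O(1)$ overlap by hypothesis. For the covering and containment conclusions, $\pi_\ell$ is $O(1)$-Lipschitz on $B(0,R)$, so $\pi_\ell(N_{2t}(\ell)) \subset N_{Ct}(x\text{-axis})$; conversely, on each horizontal plane $\{z = z_0\}$ the map $\pi_\ell$ is affine with gradient $\sqrt{(a+cz_0)^2+(b+dz_0)^2} = \Theta(1)$, so the preimage of any point within $t$ of the $x$-axis lies within $O(t)$ of $\ell$, inside the prism decomposition. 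The main obstacle is precisely the cancellation giving $|c h_y - d h_x| \lesssim t$: without recognizing that the $v_1$ direction nearly aligns with $(c,d,1)$ and contributes only $O(t)$ to this expression, one loses a factor of $\delta/t^2$ and the lemma fails.
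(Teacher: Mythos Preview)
Your proof is correct and complete. The paper itself omits the proof entirely, writing only that it ``is a straightforward computation, and is omitted''; your Taylor-expansion approach, with the Jacobian computation and the Hessian cancellation $c(v_1)_y - d(v_1)_x = O(t)$ coming from the near-alignment of $v_1$ with $(c,d,1)$, is exactly the kind of computation the authors presumably had in mind. One small remark: the constants $t$ and $2t$ in the covering and containment statements should really be read as $ct$ and $Ct$ for implicit constants depending on $R$, as your Lipschitz argument shows; this is consistent with the paper's convention that all constants may depend on $R$.
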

The proof of Lemma \ref{projectionOfPrism} is a straightforward computation, and is omitted. Observe that the (truncated) regulus strip
\begin{equation}\label{defnTruncatedReg}
S_\delta^t(\ell) = N_{t}(\ell) \cap N_\delta(R(\ell))
\end{equation}
is compatible with the prism decomposition of $N_{t}(\ell)$, in the sense that $S_\delta^t(\ell)$ intersects $\sim t/\delta$ prisms from $\mathcal{P}$, and each of these prisms is contained in the $O(\delta)$-neighborhood of $S_\delta^t(\ell)$. 

Since the longest axis of each prism $P\in\mathcal{P}$ makes angle $\leq t$ with the direction of $\ell$, this means that if $\ell,\ell'\in\mathcal{L}_{SL_2}$ with $d(\ell,\ell')\leq t$, then the prism decompositions of $N_t(\ell)$ and $N_{t}(\ell')$ are comparable where the overlap. In particular, $S_\delta^t(\ell')$ intersects $ O(t/\delta)$ prisms from the prism decomposition of $N_{t}(\ell)$. This has the following consequence. 

\begin{lem}\label{twistedProjectionRespectsRegulusStrips}
Let $0<\delta<t\leq\delta^{1/2}\leq 1$ and let $\ell,\ell' \in\mathcal{L}_{SL_2}$ with $d(\ell,\ell')\leq t$. Then there are constants $c$ and $C$ so that
\[
 N_{c\delta}\big(\pi_\ell(\ell')\big) \subset \pi_\ell \big(S_\delta^t(\ell') \big)\subset  N_{C\delta}\big(\pi_\ell(\ell')\big).
\]
\end{lem}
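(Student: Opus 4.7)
The plan is to verify both inclusions via an explicit parameterization of $R(\ell')$ and $\pi_\ell$, rather than invoking Lemma \ref{projectionOfPrism} directly. Parameterize $\ell'$ by $p'(s) = (a'+c's,\, b'+d's,\, s)$ and the regulus surface by $R(\ell')(s, r) = p'(s) + r\,(p'(s)_x,\, p'(s)_y,\, 0)$. A direct computation from \eqref{twistedProjectionEll} then gives
\[
\pi_\ell \bigl( R(\ell')(s, r) \bigr) = \bigl( s,\; (1+r)\, P(s) \bigr),
\]
where $P(s) = (a+cs)(b'+d's) - (b+ds)(a'+c's)$ is a polynomial in $s$ of degree $\leq 2$. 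Since its coefficients are bilinear in the parameter differences $(a'-a,\, b'-b,\, c'-c,\, d'-d)$, which have size $\lesssim t$, we have $|P|,\,|P'| \lesssim t$ on the relevant $s$-range; in particular $\pi_\ell(\ell') = \{(s, P(s))\}$ is a nearly flat parabola.

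The upper inclusion is then essentially immediate: for $q \in S_\delta^t(\ell')$, write $q = q_0 + w$ with $q_0 = R(\ell')(s, r)$ and $|w| \leq \delta$. The condition $q \in N_t(\ell')$ together with $\delta \leq t$ forces $q_0 \in N_{2t}(\ell')$, and this in turn forces $|r| \lesssim t$ via $\|(p'(s)_x, p'(s)_y)\| \gtrsim 1$ (lines in $\mathcal{L}_{SL_2} \cap B(0, R)$ stay uniformly away from the $z$-axis). Then $\pi_\ell(q) = (s,\, (1+r)P(s)) + O(\delta)$, and the distance from $(s,\, (1+r) P(s))$ to $(s, P(s)) \in \pi_\ell(\ell')$ is $|r P(s)| \lesssim t^2 \leq \delta$, using the hypothesis $t \leq \delta^{1/2}$.

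For the lower inclusion, given $q' = (s_0 + \alpha,\, P(s_0) + \beta) \in N_{c\delta}(\pi_\ell(\ell'))$, I would solve $\pi_\ell(q) = q'$ using the implicit function theorem starting from the base point $p'(s_0) \in \ell'$, which satisfies $\pi_\ell(p'(s_0)) = (s_0, P(s_0))$. The key input is that $d\pi_\ell$ has rank $2$ at $p'(s_0)$, with smallest singular value bounded below uniformly on $B(0, R)$: the two nontrivial $2 \times 2$ minors of $d\pi_\ell$ at $(x, y, z)$ equal $\pm(b+dz)$ and $\mp(a+cz)$, i.e.\ (up to sign) the $y$- and $x$-coordinates of the point on $\ell$ at height $z$, and these stay bounded below because $ad - bc = 1$ forces $\ell$ to keep a distance $1/\sqrt{c^2+d^2} \gtrsim 1/R$ from the $z$-axis. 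The implicit function theorem then yields $q$ with $|q - p'(s_0)| \lesssim c\delta$, and choosing $c$ small enough places $q$ inside $N_\delta(R(\ell'))\cap N_t(\ell') = S_\delta^t(\ell')$, since $p'(s_0) \in \ell' \subset R(\ell')$.

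The main technical obstacle is the uniform lower bound on the singular values of $d\pi_\ell$, which must depend only on $R$ and not on the individual line $\ell$; this is secured by the $SL_2$-condition $ad - bc = 1$. Once the Jacobian bound is in place, both inclusions reduce to first-order Taylor expansions based on the identity $\pi_\ell(R(\ell')(s, r)) = (s, (1+r) P(s))$.
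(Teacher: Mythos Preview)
Your argument is correct and gives a cleaner, more self-contained proof than the paper's. The paper derives the lemma as a consequence of the prism-decomposition machinery: it observes that $S_\delta^t(\ell')$ meets $O(t/\delta)$ prisms from the decomposition of $N_t(\ell)$ (using that the decompositions of $N_t(\ell)$ and $N_t(\ell')$ are comparable when $d(\ell,\ell')\le t$), and then applies Lemma~\ref{projectionOfPrism} to see that each such prism projects to a $\delta/t\times\delta$ rectangle. Your route bypasses this scaffolding entirely by computing the single identity $\pi_\ell\bigl(R(\ell')(s,r)\bigr)=(s,(1+r)P(s))$ and reading off both inclusions from it: the upper inclusion via $|rP(s)|\lesssim t^2\le\delta$, and the lower inclusion via the quantitative open-mapping property of the submersion $\pi_\ell$. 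This is arguably more transparent, and it makes explicit exactly where the $SL_2$ constraint enters (the lower bound $|(a+cz,b+dz)|\ge 1/\sqrt{c^2+d^2}$, which drives both the bound $|r|\lesssim t$ and the Jacobian lower bound). The paper's prism framework, on the other hand, is what gets reused in later volume estimates such as Lemma~\ref{mostlyFullRegulusStrip}, so it is not wasted effort there.

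One cosmetic point: the coefficients of $P$ are \emph{linear} (not bilinear) in the differences $(a'-a,\dots,d'-d)$, with coefficients depending on $(a,b,c,d,s)$; your bound $|P|,|P'|\lesssim t$ is of course unaffected.
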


To make use of Lemma \ref{twistedProjectionRespectsRegulusStrips}, we should compute $\pi_\ell(\ell')$. We record the result of this computation below. 

\begin{lem}\label{twistedProjectionOfSL2Line}
Let $\ell=\ell_{(a,b,c,d)},\ \ell'=\ell'_{(a',b',c'd')}\in \mathcal{L}_{SL_2}$. Then $\pi_\ell(\ell')$ is the graph of the degree-two polynomial $f(x) = Ax^2 + Bx + C$, where
\begin{equation}\label{defnABC}
A = (c,d)\cdot(d',-c'),\quad B = (a,b,c,d)\cdot(d',-c',b',-a'),\quad C = (a,b)\cdot(b',-a').
\end{equation}
\end{lem}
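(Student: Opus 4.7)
This is a direct computation, so the plan is simply to parameterize $\ell'$, apply the definition of $\pi_\ell$, and collect terms by powers of the parameter $t$ (which becomes the horizontal coordinate in the target $\RR^2$, since the first component of $\pi_\ell(x,y,z)$ is $z$).

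Concretely, I would parameterize $\ell'$ as $\ell'(t) = (a'+c't,\ b'+d't,\ t)$ for $t\in\RR$. Unwinding the definition \eqref{twistedProjectionEll}, the second coordinate of $\pi_\ell(x,y,z)$ equals $ay - bx + cyz - dxz$. Substituting $x = a'+c't$, $y = b'+d't$, $z = t$, the second coordinate of $\pi_\ell(\ell'(t))$ becomes
\[
a(b'+d't) - b(a'+c't) + c(b'+d't)t - d(a'+c't)t.
\]
Expanding and grouping by powers of $t$, the constant term is $ab'-a'b$, the linear coefficient is $ad'-bc'+b'c-a'd$, and the quadratic coefficient is $cd'-c'd$. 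The first coordinate of $\pi_\ell(\ell'(t))$ is simply $t$, so $\pi_\ell(\ell')$ is the graph of the polynomial $f$ on $\RR$ given by these three coefficients. Matching against the definition \eqref{defnABC} of $A,B,C$ (reading the dot products as $(c,d)\cdot(d',-c') = cd'-c'd$, $(a,b,c,d)\cdot(d',-c',b',-a') = ad'-bc'+cb'-da'$, and $(a,b)\cdot(b',-a') = ab'-a'b$) gives exactly the claimed identification.

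Since this is a routine identity in the parameters, there is essentially no obstacle beyond bookkeeping the signs carefully; in particular, the $SL_2$ constraints $ad-bc=1$ and $a'd'-b'c'=1$ are not needed in the derivation (they play no role in the identity itself, only later when one wants to extract geometric consequences such as those in Lemma \ref{twistedProjectionRespectsRegulusStrips}). For this reason I would present the proof as a single displayed expansion followed by the comparison with \eqref{defnABC}, and omit further verification.
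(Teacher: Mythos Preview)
Your proof is correct and is exactly the direct computation the paper has in mind; indeed the paper does not give a separate proof, merely stating that it ``records the result of this computation.'' Your remark that the $SL_2$ constraints play no role in the identity is also accurate.
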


By differentiating the expression \eqref{defnABC} and using compactness (recall that we restrict consideration to $\mathcal{L}_{SL_2}\cap B(0,R)$), we conclude the following.
\begin{lem}\label{preserveSpacing}
There exists $\rho_0>0$ and $K_0$ so that for all $\ell\in\mathcal{L}_{SL_2}$, the map $F_\ell \colon \mathcal{L}_{SL_2}\to\RR^3$ given by $F_\ell(\ell') =(A,B,C)$ (with $A,B,C$ as defined in \eqref{defnABC}) is bilipschitz on $N(\ell, \rho_0)$, with bilipschitz constant at most $K_0$. 
\end{lem}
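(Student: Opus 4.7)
The plan is to exploit the observation that the defining formula \eqref{defnABC} for $(A,B,C) = F_\ell(\ell')$ is \emph{linear} in the coordinates $(a',b',c',d')$ of $\ell'$, with coefficients depending linearly on $(a,b,c,d)$. Consequently the differential of $F_\ell \colon \RR^4 \to \RR^3$ (before restricting to $\mathcal{L}_{SL_2}$) is the constant $3\times 4$ matrix
\[
J_\ell = \begin{pmatrix} 0 & 0 & -d & c \\ -d & c & -b & a \\ -b & a & 0 & 0 \end{pmatrix}.
\]
Since $|a|,|b|,|c|,|d|\lesssim R$ on $\mathcal{L}_{SL_2}\cap B(0,R)$, we immediately obtain the upper Lipschitz bound $\|F_\ell(\ell')-F_\ell(\ell'')\|\leq\|J_\ell\|_{op}\|\ell'-\ell''\|\lesssim R\|\ell'-\ell''\|$, uniformly in $\ell$.

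For the lower bound, I would view $\mathcal{L}_{SL_2}$ as the smooth $3$-manifold in $\RR^4$ cut out by $\{a'd'-b'c'=1\}$, whose tangent space at $\ell'$ is $T_{\ell'} = \ker(d',-c',-b',a')$. The restriction $J_\ell|_{T_{\ell'}}\colon T_{\ell'}\to \RR^3$ is invertible precisely when the $4\times 4$ matrix
\[
M_{\ell,\ell'} = \begin{pmatrix} J_\ell \\ d' \ -c' \ -b' \ a' \end{pmatrix}
\]
is nonsingular, and its minimum singular value is controlled by $|\det M_{\ell,\ell'}|/\|M_{\ell,\ell'}\|_{op}^3$. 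So the task reduces to establishing a uniform lower bound on $|\det M_{\ell,\ell'}|$ for $\ell'$ near $\ell$ on $\mathcal{L}_{SL_2}\cap B(0,R)$.

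The key step is a direct computation of $\det M_{\ell,\ell'}$: expanding along the first row and then along the third column of each $3\times 3$ minor, the $2\times 2$ subdeterminants collapse using $ad-bc=1$ and yield
\[
\det M_{\ell,\ell'} \;=\; ad'+a'd-bc'-b'c,
\]
which is precisely the polar bilinear form of the quadratic $Q(x,y,z,w)=2(xw-yz)$ cutting out $\mathcal{L}_{SL_2}$. Consequently $\det M_{\ell,\ell}=2(ad-bc)=2$, and bilinearity gives $|\det M_{\ell,\ell'}-2|\lesssim R\,\|\ell'-\ell\|$. Choosing $\rho_0 \sim 1/R$ ensures $|\det M_{\ell,\ell'}|\geq 1$ whenever $\|\ell'-\ell\|\leq \rho_0$, and combined with $\|M_{\ell,\ell'}\|_{op}\lesssim R$, this produces a uniform lower bound on the smallest singular value of $J_\ell|_{T_{\ell'}}$ depending only on $R$.

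To pass from this differential estimate to a bilipschitz estimate on $F_\ell|_{N(\ell,\rho_0)}$, I would integrate the inverse of $J_\ell|_{T_{\ell'}}$ along the short geodesic segment in $\mathcal{L}_{SL_2}$ joining two nearby points $\ell',\ell''\in N(\ell,\rho_0)$ (or equivalently invoke a quantitative inverse function theorem); shrinking $\rho_0$ once more if necessary makes the resulting bilipschitz constant $K_0$ depend only on $R$. The main obstacle is the determinant computation; fortunately the explicit form of $J_\ell$ is simple enough that the calculation collapses cleanly once one recognizes the polar pairing structure, after which the remainder of the argument is routine continuity and compactness.
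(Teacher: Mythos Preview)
Your proposal is correct and follows the same approach the paper indicates: differentiate \eqref{defnABC} and use compactness of $\mathcal{L}_{SL_2}\cap B(0,R)$ to obtain uniform constants. The paper's proof is a single sentence to this effect, whereas you carry out the computation explicitly---in particular your identification of $\det M_{\ell,\ell'}=ad'+a'd-bc'-b'c$ as the polar form of the defining quadric, giving $\det M_{\ell,\ell}=2$, is a clean way to make precise what the paper leaves implicit.
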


\subsection{Lines and shadings}
Let $L\subset\mathcal{L}_{SL_2}$ be a set of lines. For each $\ell\in L$, let $Y(\ell)$ be a union of axis-aligned $\delta$-cubes that intersect $\ell$. The set $Y(\ell)$ is called a shading of $\ell$, and if $|Y(\ell)|\geq\lambda\delta^2$, then we say $Y(\ell)$ is $\lambda$-dense. 

We call the pair $(L,Y)$ an \emph{arrangement of lines and their associated shading}. We will sometimes write $(L,Y)_\delta$ to emphasize the scale $\delta$. For such an arrangement, define $E_L=\bigcup_{\ell\in L}Y(\ell)$; this is a union of axis-aligned $\delta$-cubes. For each $x\in\RR^3$, define $L(x) = \{\ell\in L\colon x\in Y(\ell)\}$. Note that $E_L$ and $L(x)$ depend on the choice of shading $Y$, but this shading will always be apparent from context. Note as well that $L(x)$ is the same for all $x$ in the same axis-aligned $\delta$-cube.  

\begin{defn}\label{regularShadingDef}
Let $\ell\in\mathcal{L}_{SL_2}$. We say a shading $Y(\ell)$ is \emph{regular} if for each $x\in Y(\ell)$ and each $\delta\leq r\leq R$, we have 
\[
|Y(\ell)\cap B(x,r)| \geq \frac{r|Y(\ell)|}{C|\log\delta|} .
\]
Here $C$ is a constant that depends on the quantity $R$ from the statement of Theorem \ref{mainThm}. 
\end{defn}
\begin{lem}\label{lemRegularShading}
Let $\ell\subset B(0,R)$ be a line segment and let $Y(\ell)$ be a shading. If the constant $C=C(R)$ from Definition \ref{regularShadingDef} is chosen appropriately, then there is a regular refinement $Y'(\ell)\subset Y(\ell)$ with  $|Y'(\ell)|\geq \frac{1}{2}|Y(\ell)|$. 
\end{lem}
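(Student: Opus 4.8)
The plan is to use a standard stopping-time (or "Vitali-type") argument to delete the parts of the shading that violate the regularity condition, and to control the total mass deleted by a union bound over dyadic scales. First I would set up the bad set: say a $\delta$-cube $Q\subset Y(\ell)$ is \emph{bad at scale $r$} (for $r$ a dyadic number in $[\delta,R]$) if $|Y(\ell)\cap B(x_Q,r)| < \frac{r|Y(\ell)|}{C|\log\delta|}$, where $x_Q$ is the center of $Q$. The refinement $Y'(\ell)$ will be obtained by removing all cubes that are bad at some dyadic scale $r$. Since $\ell\subset B(0,R)$, the line segment $\ell$ has length $O(R)$, and $Y(\ell)$ is a union of $\delta$-cubes meeting $\ell$, so $|Y(\ell)|\lesssim R\delta^2$ and there are only $O(|\log\delta|)$ relevant dyadic scales $r$.

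The key step is to bound, for each fixed dyadic $r$, the measure of $B_r := \{x\in Y(\ell): |Y(\ell)\cap B(x,r)| < \frac{r|Y(\ell)|}{C|\log\delta|}\}$. I would cover $B_r$ by balls $B(x_i, r)$ with the $x_i\in B_r$ and the balls $B(x_i, r/2)$ pairwise disjoint (a maximal $r/2$-separated subset of $B_r$). Because $\ell$ is contained in a ball of radius $R$, it can be covered by $O(R/r)$ balls of radius $r$, hence any $r/2$-separated subset of the $\delta$-neighborhood of $\ell$ has at most $O(R/r)$ points; thus there are $\lesssim R/r$ balls in the cover. Each contributes at most $\frac{r|Y(\ell)|}{C|\log\delta|}$ to $|B_r|$ (since $x_i\in B_r$), so $|B_r| \lesssim \frac{R}{r}\cdot\frac{r|Y(\ell)|}{C|\log\delta|} = \frac{R|Y(\ell)|}{C|\log\delta|}$. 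Summing over the $O(|\log\delta|)$ dyadic scales gives $\sum_r |B_r| \lesssim \frac{R|Y(\ell)|}{C}$, which is at most $\frac12|Y(\ell)|$ once $C = C(R)$ is chosen large enough. Setting $Y'(\ell) = Y(\ell)\setminus\bigcup_r B_r$ (and then rounding $Y'(\ell)$ out to a union of $\delta$-cubes, which only helps) yields $|Y'(\ell)|\geq\frac12|Y(\ell)|$.

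It remains to check that $Y'(\ell)$ is genuinely regular, i.e.\ that the inequality holds for \emph{all} $r\in[\delta,R]$, not just dyadic ones, and with $x$ ranging over the surviving cubes. For a general $r$, pick the dyadic $r'\in[r/2,r]$; if $x\in Y'(\ell)$ then $x\notin B_{r'}$, so $|Y(\ell)\cap B(x,r)| \geq |Y(\ell)\cap B(x,r')| \geq \frac{r'|Y(\ell)|}{C|\log\delta|}\geq \frac{r|Y(\ell)|}{2C|\log\delta|}$, and since $|Y'(\ell)|\leq|Y(\ell)|$ while $|Y(\ell)\cap B(x,r)|$ differs from $|Y'(\ell)\cap B(x,r)|$ by at most the deleted mass --- here one wants the cleaner route of simply requiring the defining inequality to reference $|Y'(\ell)|$ on the right, which is smaller, so the bound $|Y'(\ell)\cap B(x,r)|\geq |Y(\ell)\cap B(x,r)| - \tfrac12|Y(\ell)| \geq \cdots$ is not immediate and instead one absorbs the factor of $2$ and the $|Y'|\leq|Y|$ comparison into the constant $C$. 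Concretely, running the stopping-time argument with the threshold $\frac{r|Y(\ell)|}{C_0|\log\delta|}$ for a first constant $C_0$, and then noting $|Y'(\ell)|\geq\frac12|Y(\ell)|$, one gets $|Y'(\ell)\cap B(x,r)| \geq \frac{r|Y(\ell)|}{2C_0|\log\delta|} \geq \frac{r|Y'(\ell)|}{2C_0|\log\delta|}$, so the final constant in Definition \ref{regularShadingDef} is $C = 2C_0$. The main obstacle --- really the only subtlety --- is this bookkeeping with the constants and the passage from dyadic to arbitrary scales; the geometric input (a line segment in $B(0,R)$ meets $O(R/r)$ many $r$-balls) is elementary.
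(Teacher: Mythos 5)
There is a genuine gap, and it is precisely the step you flag as ``really the only subtlety.'' Your stopping-time argument shows that for a surviving point $x \in Y'(\ell)$ and a scale $r$ one has $|Y(\ell)\cap B(x,r)| \geq \frac{r|Y(\ell)|}{2C_0|\log\delta|}$; but the definition of regularity requires the lower bound for $|Y'(\ell)\cap B(x,r)|$, i.e.\ for the \emph{pruned} shading. You assert $|Y'(\ell)\cap B(x,r)| \geq \frac{r|Y(\ell)|}{2C_0|\log\delta|}$, but nothing in the construction prevents the deleted set $\bigcup_s B_s$ from being concentrated inside $B(x,r)$. The global mass deleted is $\leq \frac{1}{2}|Y(\ell)|$, which is enormous compared with the local mass $\frac{r|Y(\ell)|}{2C_0|\log\delta|}$ when $r$ is small; the deletion could therefore wipe out essentially all of $Y(\ell)\cap B(x,r)$ while keeping $x$ itself. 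This is not a bookkeeping issue that can be absorbed into $C$, and it does not go away by iterating the pruning, since the mass lost at each round is bounded below by a fixed fraction and the iteration need not terminate.

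The paper avoids this by making the deletion \emph{nested} rather than \emph{parallel}. It divides $\ell$ into dyadic segments at each scale $i$ and deletes, scale by scale, the cubes over a segment $J$ whenever the mass of the current shading over $J$ drops below $\frac{|Y(\ell)||J|}{4\log_2(R/\delta)}$. Because the segments at consecutive scales are nested, a surviving point $x \in Y' = Y_{M-1}$ lies inside a surviving segment $J$ of length $\leq r/2$ at an appropriate scale, and the crucial point is that the entire set $Y_i^J$ of cubes over $J$ survives to $Y'$ (either $Y_{i-1}^J \subset Y_i$ or $Y_{i-1}^J\cap Y_i = \emptyset$, and this propagates to all later scales). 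Since $J \subset B(x,r)$, this gives $|Y'(\ell)\cap B(x,r)| \geq |Y_i^J| \gtrsim \frac{r|Y(\ell)|}{|\log\delta|}$ directly. It is exactly this ``surviving points carry a surviving block'' feature that your union-bound deletion lacks, and that is why the two arguments are not interchangeable. The fix is to replace your per-point bad sets $B_r$ by a hierarchical, segment-based pruning as in the paper.
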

\begin{proof}
Let $M=\lceil \log_2(R/\delta)\rceil$. Define $Y_{-1} = Y(\ell)$ and let $\alpha=|Y(\ell)|\delta^{-2}$. For each $i=0,\ldots,M-1$, divide $\ell$ into $2^{M-i}$ equal line-segments of length $\sim R 2^{i-M}$; denote this set of line segments by $\{J\}$. Let $Y_{i} = \bigcup_J Y_{i-1}^J$, where  $Y_{i-1}^J$ is the union of $\delta$-cubes $Q\subset Y_{i-1}$ that intersect $J$, and the union is taken over those segments $J$ satisfying $|Y_{i-1}^J|\geq \frac{\alpha|J|\delta^2}{4\log_2(R/\delta)}$. Note that for each set $Y_{i-1}^J$, we have that either $Y_{i-1}^J\subset Y_i$ or $Y_{i-1}^J\cap Y_i=\emptyset.$ In particular, if $x\in Y_{M-1}$ and if $x\in Y_i^J$ for some index $i$ and some line-segment $J$, then $Y_i^J\subset Y_{M-1}$. Define $Y'(\ell) = Y_{M-1}$. 
By our definition of $Y_i$, we have $|Y_i|\geq |Y_{i-1}| - \frac{|Y(\ell)|}{4\log_2(R/\delta)}$, and hence $|Y'(\ell)| = |Y_{M-1}| \geq \frac{1}{2}|Y(\ell)|$.

We will verify that $Y'$ is a regular shading. Let $x\in Y'$ and let $r\in[\delta,R]$. Let $i$ be the biggest index with $R2^{i-M}\leq r/2$. Then $x\in Y_i^J$ for some line-segment $J$ of length $\leq R2^{i-M}\leq r/2$, and hence $J\subset B(x,r)$. But this means that 
\[
|Y'(\ell)\cap B(x,r)|\geq |Y_i^J| \gtrsim  \frac{\alpha|J|\delta^2}{\log_2(R/\delta)} \gtrsim  \frac{r|Y(\ell)|}{|\log\delta|}.\qedhere
\]
\end{proof}

\begin{defn}
If $L$ is a set of lines and $Y(\ell)$ is a regular shading of $\ell$ with $|Y(\ell)|\geq\lambda \delta$ for each $\ell\in L$, then we say that $(L,Y)_\delta$ is a set of lines with a regular, $\lambda$-dense shading. 
\end{defn}

\subsection{Regulus strips have large intersection with $E_L$}\label{regStripLargeIntersectionEl}

Recall that the regulus strip $S_\delta^t(\ell)$ is covered by about $t/\delta$ prisms from the prism decomposition of $N_t(\ell)$. $\ell$ intersects a typical prism from this collection in a line segment of length about $|\ell\cap P|\sim \delta/t$, and if $Y(\ell)$ is a shading with $|Y(\ell)|\sim\delta^2$, then we might expect a typical intersection to have size roughly $\delta^3 /t.$ Note that if $t>\!\!>\delta$, then this is much smaller than the entire prism, which has volume $|P|\sim \delta^2$. The next result says that under certain circumstances, $|E_L\cap P|$ is much larger than $|Y(\ell)\cap P|$; in fact, $|E_L\cap P|$ is almost as large as $|P|$. To make this statement precise, we need the following definition.

\begin{defn}\label{defnRx}
Let $(L,Y)_\delta$ be a set of lines, and their associated shading. For each $x\in \RR^3$, define 
\[
r_L(x) = \delta+\sup\{\angle(\ell,\ell')\colon \ell,\ell' \in L(x)\}.
\]
If the arrangement $(L,Y)_\delta$ is apparent from context, we will write $r(x)$ in place of $r_L(x)$. 
\end{defn}

\begin{lem}\label{mostlyFullRegulusStrip}
Let $0<\delta<1$, let $r\in [4\delta,1]$, and let $t\in [\delta, (\delta r)^{1/2}]$. Let $(L,Y)_\delta$ be a set of lines with a regular, $\lambda$-dense shading. Let $\ell \in L$, let $\mathcal{P}$ be the prism decomposition of $N_t(\ell)$ of thickness $\delta$, and let $P\in \mathcal{P}$. Suppose that
\begin{equation}\label{goodPointsOnEll}
\big| P \cap Y(\ell) \cap \{ x\colon r \leq r(x) \leq 2r\}\big|\geq \alpha\delta^3t^{-1},
\end{equation}
for some $\alpha>0$. (see Remark \ref{interpretGoodPointsOnEllRem} below). Then there is a number $C=O(1)$ so that
\begin{equation}
\big| CP \cap E_L\big| \gtrapprox  \alpha \lambda^2 |P|,
\end{equation}
where $CP$ denotes the $C$-fold dilate of $P$. 
\end{lem}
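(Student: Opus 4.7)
The plan is to apply a Cordoba-style $L^2$ argument based on a collection of \emph{partner lines} extracted from the hypothesis. The main geometric inputs are: the SL$_2$ property, which forces all lines through a common point $x$ to lie in $\Pi(x)$, hence constrains partner lines to the $v_1 v_2$-plane of $P$; the regularity of the shadings; and the bilipschitz map $F_\ell$ of Lemma \ref{preserveSpacing}, which controls the angular distribution via $(A,B,C)$-space.

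\emph{Step 1: Construction of partner lines.} For each $\delta$-cube $Q\subset G:=P\cap Y(\ell)\cap\{x\colon r\leq r(x)\leq 2r\}$, pick $x_Q\in Q$ and a partner line $\ell_Q\in L(x_Q)$ with $\angle(\ell,\ell_Q)\sim r$; such a partner exists by the definition of $r(x_Q)$ together with the triangle inequality (since $\ell\in L(x_Q)$ and $L(x_Q)$ contains a pair realizing angle $\sim r$). By the SL$_2$ property, $\ell$ and $\ell_Q$ are coplanar in $\Pi(x_Q)$, so two lines meeting at angle $\sim r$ can be mutually within $\delta$ only along a segment of $\ell$ of length $\lesssim\delta/r$, i.e.\ $\lesssim 1/r$ many $\delta$-cubes. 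Thus a single line $\ell'\in L$ serves as partner for at most $O(1/r)$ cubes, and since $G$ contains $\gtrsim\alpha/t$ cubes a greedy selection yields $|L_P|\gtrsim\alpha r/t$.

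\emph{Step 2: Shading lower bound.} For $\ell'\in L_P$ with triggering point $x\in Y(\ell)\cap Y(\ell')\cap P$, the SL$_2$ property places $\ell'$ approximately in $\Pi(x)$, hence in the $v_1 v_2$-plane of $P$ with direction at angle $\sim r$ to $v_1$. Since $t\leq(\delta r)^{1/2}$ gives $t/r\leq\delta/t$, tracing $\ell'$ from $x$ it exits $CP$ first through the $v_2$-face after distance $\sim t/r$. The deviation of $\ell'$ from $\Pi(x)$ in the $v_3$-direction over this length is $O(t^2)=O(\delta)$ (since $t^2\leq\delta r\leq\delta$), which fits in $CP$ after enlarging by a constant. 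Regularity of $Y(\ell')$ at $x$ with radius $t/r\in[\delta,R]$ yields
\[
|Y(\ell')\cap CP|\gtrsim|Y(\ell')\cap B(x,t/r)|\gtrapprox\frac{t}{r}|Y(\ell')|\gtrapprox\lambda\delta^2\cdot\frac{t}{r},
\]
and summing over $\ell'\in L_P$ gives $\sum_{\ell'\in L_P}|Y(\ell')\cap CP|\gtrapprox\alpha\lambda\delta^2$.

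\emph{Step 3: Cauchy-Schwarz and the main obstacle.} Set $\mu=\sum_{\ell'\in L_P}\mathbf{1}_{Y(\ell')\cap CP}$. By Cauchy-Schwarz,
\[
|E_L\cap CP|\geq\Bigl|\bigcup_{\ell'\in L_P}Y(\ell')\cap CP\Bigr|\geq\frac{\|\mu\|_1^2}{\|\mu\|_2^2}=\frac{\bigl(\sum_{\ell'}|Y(\ell')\cap CP|\bigr)^2}{\sum_{\ell',\ell''}|Y(\ell')\cap Y(\ell'')\cap CP|}.
\]
The main obstacle is to show the denominator is $\lessapprox\alpha\delta^2$. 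For any $\ell',\ell''\in L_P$, both approximately coplanar near $CP$ and meeting at angle $\theta=\angle(\ell',\ell'')\in[0,2r]$, the standard tube-intersection estimate gives $|N_\delta(\ell')\cap N_\delta(\ell'')\cap CP|\lesssim\delta^3/\max(\theta,\delta r/t)$, with the cap reflecting nearly parallel tubes crossing the full prism. The diagonal contributes $\sum_{\ell'}|N_\delta(\ell')\cap CP|\lesssim|L_P|\cdot\delta^2 t/r\lesssim\alpha\delta^2$. For the off-diagonal, I would apply a dyadic pigeonhole in angle to extract a sub-family of $L_P$ with approximately uniform angular density (losing a polylog factor), then use the bilipschitz map $F_\ell$ to count pairs: $L_P$ embeds into a $\delta\times r\times O(1)$ slab in $(A,B,C)$-space, and the angular uniformity controls the count in each dyadic angular slab. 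Summing the overlap estimate over $\theta\sim 2^k(\delta r/t)$, each dyadic level contributes $\lessapprox\alpha\delta^2$ (using the hypothesis $t^2\leq\delta r$ to ensure the sum converges). Combining gives
\[
|E_L\cap CP|\gtrapprox\frac{(\alpha\lambda\delta^2)^2}{\alpha\delta^2}=\alpha\lambda^2\delta^2=\alpha\lambda^2|P|,
\]
as required.
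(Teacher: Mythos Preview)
Your overall strategy---Cordoba's $L^2$ argument with partner lines, then Cauchy--Schwarz---is exactly what the paper does, and your Steps 1 and 2 are essentially correct. The gap is in Step 3.

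Your greedy selection produces $|L_P|\gtrsim\alpha r/t$ \emph{distinct} lines, but carries no control on where their triggering cubes sit along $\ell$. Nothing prevents all of $L_P$ from being anchored in a single $\delta/r$-segment of $\ell$: take $M\sim\alpha r/t$ lines all passing through one $\delta$-ball at $x\in\ell$, with $\delta$-separated directions inside $\Pi(x)$, and shadings arranged so that each $\ell_i$ serves only its own triggering cube (so the greedy never forces separation). Then every pairwise overlap is the capped value $\sim\delta^2 t/r$, and the denominator is $\sim M^2\delta^2 t/r\sim\alpha^2(r/t)\delta^2$, not the $\alpha\delta^2$ you assert. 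Your appeal to $F_\ell$ does not rescue this: $F_\ell$ is bilipschitz for the metric $d(\ell',\ell'')$ on $\mathcal L_{SL_2}$, which is \emph{not} comparable to $\angle(\ell',\ell'')$ (two lines anchored at far-apart points of $\ell$ can be parallel yet far in $d$), so the ``$\delta\times r\times O(1)$ slab'' gives no angular count. The phrase ``angular uniformity controls the count'' is doing all the work and is never justified.

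The paper fixes this at Step 1, not Step 3. Instead of selecting distinct lines, it selects \emph{well-spaced anchor points} $x_1,\dots,x_M\in X$ (your set $G$) with $|x_i-x_j|\geq 100|i-j|\,\delta/r$ and $M\gtrsim\alpha r/t$, then one partner $\ell_i\in L(x_i)$ per point. This spacing is precisely what makes the $L^2$ bound close: if $Y(\ell_i)$ and $Y(\ell_j)$ overlap in $CP$, each line reaches the overlap point within length $\lesssim t/r$ of its anchor, and both make angle $\sim r$ with the base direction $\overline{x_ix_j}$; the law of sines then forces
\[
\angle(\ell_i,\ell_j)\ \gtrsim\ \frac{|x_i-x_j|\cdot r}{t/r}\ \gtrsim\ \frac{\delta r}{t}\,|i-j|.
\]
Hence $|N_\delta(\ell_i)\cap N_\delta(\ell_j)\cap CP|\lesssim\delta^2 t/\bigl(r(|i-j|+1)\bigr)$, the $L^2$ norm squared is the harmonic sum $\lesssim\delta^2 t r^{-1} M\log M$, and Cauchy--Schwarz gives $|E_L\cap CP|\gtrapprox\lambda^2 M t r^{-1}\delta^2\gtrsim\alpha\lambda^2|P|$. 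The spacing of the $x_i$, not the distinctness of the $\ell_i$, is the mechanism you are missing.
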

\begin{rem}\label{interpretGoodPointsOnEllRem}
To interpret \eqref{goodPointsOnEll}, note that the set on the LHS is contained in a tube segment of dimensions $\delta/ t \times \delta \times\delta$, and thus this segment has volume roughly $\delta^3t^{-1}$.
\end{rem}
\begin{proof}
Let $X$ be the set on the LHS of \eqref{goodPointsOnEll}. Let $x_1,\ldots,x_M$, $M \gtrsim \alpha r/t$ be points in $X$, arranged so that $|x_i-x_j| \geq 100 |i-j| \delta/r$. For each index $i$, select $\ell_i\in L(x_i)$ with $\angle(\ell,\ell_i) \in [r/4, 2r]$; such a tube $\ell_i$ must exist, since $x_i\in X$ implies that $\angle(\ell,\ell_i)\leq 2r$ for each line $\ell_i\in L(x_i)$. On the other hand, there are at least two lines $\ell_i,\ell_i'\in L(x_i)$ with $\angle(\ell_i,\ell_i')\geq r-\delta\geq r/2$, so by the triangle inequality at least one of them must make angle $\geq r/4$ with $\ell$. 

Note that the sets $\{Y(\ell_i)\}$ are contained in the $2\delta$ neighborhood of their associated lines $\ell_i$, each of which make an angle $\geq r/4$ with $\ell$. We conclude that the spacing condition on $|x_i-x_j|$ ensures that the sets $\{Y(\ell_i)\cap \ell\}$ are disjoint. 

Define $T = N_{t}(\ell)$. Since $\angle(\ell,\ell_i) \in [r/4, 2r]$ and $t\leq(\delta r)^{1/2}$, we have that $\ell_i\cap T \cap B(x_i, \delta/t)$ is a line segment of length $\sim t/r$, i.e.~the line segment $\ell_i$ exits the cylinder $T$ before (or at least not long after) it exits the ball $B(x_i, \delta/t)$. Since the prism $P$ has dimensions $\delta/t\times t\times \delta$, this means that if the constant $C$ is chosen appropriately, we have that $\ell_i \cap CP$ is a line segment of length $\sim t/r$. Fix this choice of $C$, and define $\tilde P = CP$. Since $Y(\ell_i)$ is a regular shading, we therefore have 
\begin{equation}\label{Y1TiCapNr}
|Y(\ell_i)\cap \tilde P |\gtrapprox \lambda  (t/r)\delta^2.
\end{equation}
 
If $i\neq j$ and $Y(\ell_i)\cap Y(\ell_j) \cap  \tilde P \neq\emptyset$, then since $Y(\ell_i)$ and $Y(\ell_j)$ intersect $\ell$ at points $x_i$ and $x_j$ respectively, with $|x_i-x_j|\geq 100|i-j| \delta/r$, we can use the law of sines to estimate $\angle(\ell_i,\ell_j)\gtrsim \frac{\delta r}{t}|i-j|$, and hence  
\[
|\tilde P \cap Y(\ell_i)\cap Y(\ell_j)|\leq |T \cap N_{2\delta}(\ell_i)\cap N_{2\delta}(\ell_j)|\leq \frac{\delta^2 t}{r(|i-j|+1)}.
\]
Thus
\[
\Big \Vert \sum_{i=1}^M \chi_{Y(\ell_i)}\Big\Vert_{L^2(\tilde P)}^2 \lesssim  \delta^2 t r^{-1}\sum_{1\leq i,j\leq M} \frac{1}{|i-j|+1} \lesssim \delta^2 t r^{-1} M \log M. 
\]
On the other hand, by \eqref{Y1TiCapNr} we have
\[
\Big \Vert \sum_{i=1}^M \chi_{Y(\ell_i) }\Big\Vert_{L^1(\tilde P)} \gtrapprox M \lambda t r^{-1}\delta^2.
\]
By Cauchy-Schwartz, we have 
\[
|CP\cap E_L|\geq \Big| \tilde P \cap \bigcup_{i=1}^M Y(\ell_i)\Big| \geq  \frac{ \Big \Vert\sum_{i=1}^M \chi_{Y(\ell_i) }\Big\Vert_{L^1(\tilde P)}^2}{\Big \Vert \sum_{i=1}^M \chi_{  Y(\ell_i) }\Big\Vert_{L^2(\tilde P)}^2} \gtrapprox \lambda^2 M tr^{-1}\delta^2  \gtrsim\alpha \lambda^2 |P|.\qedhere  
\]
\end{proof}

For the next lemma, we consider a set of lines contained in the thin neighborhood of a core-line $\ell_0$. We find a dense shading of $\ell_0$ (or more precisely, a line close to $\ell_0$) at a coarser scale $t$. Each $t$-cube in this shading has large intersection with each of the regulus strips $S_\delta^t(\ell)$ coming from the lines close to $\ell_0$. The precise statement is as follows.

\begin{lem}\label{fiberAboveProjectionIsLarge}
Let $0<\delta<1$, let $r\in [4\delta,1]$, and let $t\in [\delta, (\delta r)^{1/2}]$.
Let $(L,Y)$ and $(L',Y')$ be sets of lines with regular, $\lambda$-dense shadings. Suppose that $L'\subset L\cap B$, for some ball $B = B(\ell_0, t)\subset \mathcal L_{SL_2}$. Suppose also that $Y'(\ell')\subset \{x \colon r \leq r_L(x) \leq 2r\}$ for each $\ell'\in L'$. 

Then there exists a line $\tilde\ell\in B(\ell_0,2t)$ and a $\Omega^*(\lambda^4)$-dense shading $\tilde Y$ of $\tilde\ell$ by $t$-cubes (i.e.~$\tilde Y(\tilde \ell)$ is a union of $t$-cubes). After replacing $(L',Y')$ by a $\Omega^*(1)$-density refinement, we have 
\begin{equation}\label{volumeBdEachTcube}
|E_L\cap Q| \gtrapprox \lambda^3 t^2 \Big| \bigcup_{\ell' \in L'} \pi_{\ell_0}(Y'(\ell'))\Big|\quad\textrm{for each $t$-cube}\ Q\subset \tilde Y(\tilde\ell). 
\end{equation}
\end{lem}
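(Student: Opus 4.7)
The plan is to apply Lemma~\ref{mostlyFullRegulusStrip} at the scale of individual prisms and then aggregate the resulting bounds over $t$-cubes via a pigeonhole in the projection $\pi_{\ell_0}$. Fix the prism decomposition $\mathcal{P}$ of $N_{2t}(\ell_0)$ into prisms of thickness $\delta$, and set $G=\bigcup_{\ell'\in L'}\pi_{\ell_0}(Y'(\ell'))$ and $A=|G|$. The key intermediate estimate I would aim for is
\[
|E_L\cap O(1)Q|\gtrapprox \lambda^3\,t\,|\pi_{\ell_0}(Q)\cap G|
\qquad(\star)
\]
for every axis-aligned $t$-cube $Q$ near $\ell_0$.

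To establish $(\star)$, I would produce a collection $\mathcal{G}\subset\mathcal{P}$ of ``good'' prisms. For each $x\in Y'(\ell')$, the regularity of $Y'(\ell')$ at scale $\delta/t$ gives $|Y'(\ell')\cap B(x,\delta/t)|\gtrapprox \lambda\delta^3/t$, and $B(x,\delta/t)$ meets only $O(1)$ prisms of $\mathcal{P}$, so one of them---call it $P$---satisfies $|P\cap Y'(\ell')\cap\{r\le r_L\le 2r\}|\gtrapprox \lambda\delta^3/t$. Since the prism decompositions of $N_t(\ell_0)$ and $N_t(\ell')$ are comparable (by the remarks following Lemma~\ref{twistedProjectionRespectsRegulusStrips}), applying Lemma~\ref{mostlyFullRegulusStrip} with the line $\ell=\ell'$ gives $|CP\cap E_L|\gtrapprox \lambda^3|P|$; declare all such $P$ good. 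By Lemma~\ref{projectionOfPrism}, a prism of $\mathcal{P}$ is good precisely when its $\pi_{\ell_0}$-image intersects $G$, so the count of good prisms inside $O(1)Q$ is $\gtrsim t\,|\pi_{\ell_0}(Q)\cap G|/\delta^2$; summing the per-prism bound (the dilates $CP$ have bounded overlap) yields $(\star)$.

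With $(\star)$ in hand, I would run a dyadic pigeonhole to build $\tilde Y$. Setting $S(Q)=|\pi_{\ell_0}(Q)\cap G|$ for $Q$ in the axis-aligned $t$-grid near $\ell_0$, the images $\pi_{\ell_0}(Q)$ have bounded overlap so $\sum_Q S(Q)\sim A$. Pigeonholing on the dyadic level of $S(Q)$ produces a scale $B$ and a subcollection $\tilde{\mathcal{Q}}$ of $N$ cubes with $S(Q)\sim B$ and $NB\gtrapprox A$; a further pigeonhole over the $O(1)$ possible cross-section positions of $N_{2t}(\ell_0)$ identifies a line $\tilde\ell\in B(\ell_0,2t)$ whose natural $t$-cube cover meets $\tilde{\mathcal{Q}}$ in an $\Omega^*(1)$-fraction; these cubes constitute $\tilde Y(\tilde\ell)$. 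Setting $Y''(\ell')=Y'(\ell')\cap\bigcup_{Q\in\tilde Y}Q$ and pigeonholing over $\ell'\in L'$ then produces the required $\Omega^*(1)$-density refinement $(L'',Y'')$, with $A''=|\bigcup_{\ell'\in L''}\pi_{\ell_0}(Y''(\ell'))|\sim NB$. Then $(\star)$ yields
\[
|E_L\cap Q|\gtrapprox \lambda^3\,t\,S(Q)\sim \lambda^3\,tB\sim \lambda^3\,t\,A''/N\gtrsim \lambda^3\,t^2 A''
\]
for each $Q\in\tilde Y$, using $N\le R/t\sim 1/t$.

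The main technical obstacle is verifying the $\Omega^*(\lambda^4)$-density of $\tilde Y$. Since this density equals $Nt$ with $N\gtrapprox A/t^2$, it reduces to the lower bound $A\gtrapprox \lambda^4 t$; combining the crude estimate $|\pi_{\ell_0}(Y'(\ell'))|\gtrapprox \min(\lambda\delta/t,\delta)$ for a single $\ell'\in L'$ with the scale constraint $t\le(\delta r)^{1/2}$ should supply this, but carefully tracking the $\lambda$-losses through the chain of pigeonholes---and ensuring the final refinement is genuinely $\Omega^*(1)$-dense rather than $\Omega^*(\lambda^c)$-dense for some positive $c$---is where I expect the bulk of the effort.
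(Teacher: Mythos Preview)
Your per-cube estimate $(\star)$ has a scaling gap. The prisms in the decomposition of $N_t(\ell_0)$ have dimensions $\delta/t \times t \times \delta$, and since $t \leq (\delta r)^{1/2} \leq \delta^{1/2}$ the prism length $\delta/t$ is at least $t$, strictly larger unless $t=\delta^{1/2}$. So a good prism $P$ is \emph{not} contained in $O(1)Q$; it stretches across $\sim \delta/t^2$ consecutive $t$-cubes, and Lemma~\ref{mostlyFullRegulusStrip} only gives $|CP\cap E_L|\gtrapprox\lambda^3|P|$ globally, with no control on which $t$-cube along $P$ carries that mass. Thus the phrase ``good prisms inside $O(1)Q$'' does not make sense, and $(\star)$ cannot be deduced prism by prism. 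The paper does not attempt a per-cube bound: it first introduces tube segments $U$ of length $\delta/t$ (matching the prism length), sums the prism bounds to get $\sum_Q |Q\cap E_L\cap\bigcup CP|\gtrapprox \lambda^3\delta^2 M\,(\#\mathcal U_2)$, and only then pigeonholes so that the surviving cubes $\mathcal Q_3$ all have \emph{uniform} $E_L$-intersection; the per-cube bound \eqref{volumeBdEachTcube} is obtained at the end by dividing the total by $\#\mathcal Q_3 \lesssim 1/t$.

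Your density argument also fails. You reduce the $\Omega^*(\lambda^4)$-density of $\tilde Y$ to $A\gtrapprox\lambda^4 t$ and propose to extract this from a single $|\pi_{\ell_0}(Y'(\ell'))|$, but that quantity is $\sim\lambda\delta$, and $\lambda\delta\gtrapprox\lambda^4 t$ is false when $\lambda\sim 1$ and $t\gg\delta$ (e.g.\ $t=\delta^{1/2}$). The paper gets the density not through $A$ but through a count at the tube-segment scale: since each $Y'(\ell')$ has total mass $\gtrapprox\lambda\delta^2$ but at most $\delta^3/t$ in any segment $U$, one has $\#\mathcal U_2\gtrapprox\lambda t/\delta$, and then the cube--prism volume constraint $|Q\cap\bigcup CP|\leq 4M\delta t^2$ forces $\#\mathcal Q_3\gtrapprox \lambda^3\delta t^{-2}(\#\mathcal U_2)\gtrapprox\lambda^4/t$. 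The intermediate scale $\delta/t$ is doing essential work in both places; without it neither the uniformization nor the density estimate goes through.
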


\begin{rem}
We should interpret \eqref{volumeBdEachTcube} as follows. The union on the RHS of \eqref{volumeBdEachTcube} is contained in the $O(t)$ neighborhood of the $x$-axis in $B(0,R)\subset\RR^2$. If this union has maximal size and if $\lambda\sim 1$, then the RHS of \eqref{volumeBdEachTcube}  has size roughly $t^3$ (ignoring logarithmic factors), and thus $|E_L\cap Q|\approx |Q|$. 
\end{rem}
\begin{proof}
Let $T = N_t(\ell_0)$ and let $\mathcal{P}_0$ be the prism decomposition of $T$ into prisms of thickness $\delta$. Let $\mathcal{P}_1\subset\mathcal{P}_0$ so that $\sum_{\ell'\in L'}|Y'(\ell')\cap P|$ is about the same (up to a factor of 2) for each $P\in\mathcal{P}_1$, and not too much mass has been lost, i.e.~
\begin{equation}\label{P1PreservesMass}
\sum_{P\in\mathcal{P}_1}\sum_{\ell'\in L'}|Y'(\ell')\cap P| \gtrapprox \sum_{\ell'\in L'}|Y'(\ell')|.
\end{equation}

 We will divide $T$ into  $O(t/\delta)$ tube segments of length $\delta/t$; let $\mathcal{U}$ denote this set of tube segments. After dyadic pigeonholing, select an integer $M$ and a set $\mathcal{U}_2\subset\mathcal{U}$ so that each tube segment $U\in\mathcal{U}_2$ intersects between $M$ and $2M$ prisms from $\mathcal{P}_1$; denote this set of prisms by $\mathcal{P}_2$---we have $\#\mathcal{P}_2\gtrapprox \#\mathcal{P}_1$, and \eqref{P1PreservesMass} remains true with $\mathcal{P}_2$ in place of $\mathcal{P}_1$.  Since $|U\cap Y'(\ell')|\lesssim \delta^3/t$ for each $\ell'\in L'$, we must have have 
 \begin{equation}\label{boundU2}
 \#\mathcal{U}_2\gtrapprox \lambda t/\delta.
 \end{equation}

Each prism $P\in\mathcal{P}_2$ intersects at least one set of the form $Y'(\ell')$, $\ell'\in L'$. But since $Y'(\ell')$ is $\lambda$-dense and regular, and $\ell'\in B(t,\ell_0)$, we have that $|2P \cap Y'(\ell')|\gtrapprox \lambda \delta^3/t$ (recall that $2P\cap\ell'$ is a line-segment of length roughly $\delta/t$), and thus by Lemma \ref{mostlyFullRegulusStrip} (with $\delta,r,t,\lambda$ as above and $\alpha\approx\lambda$) there is a constant $C=O(1)$ so that
\begin{equation}\label{prismVolumeBd}
|E_L\cap CP|\gtrapprox  \lambda^3 |P|\sim\lambda^3 \delta^2. 
\end{equation}

Let $\mathcal{Q}$ be a covering of $\bigcup_{U\in \mathcal{U}_2}U$ by grid-aligned $t$-cubes. Using \eqref{prismVolumeBd}, we have
\begin{equation}\label{sumOverQ}
\sum_{Q\in\mathcal{Q}}\Big|Q\cap E_L \cap \bigcup_{P\in\mathcal{P}_2}CP\Big| \gtrapprox  \lambda^3 \delta^2(\#\mathcal{P}_2)
\gtrsim  \lambda^3 \delta^2 M(\#\mathcal{U}_2).
\end{equation}
Pigeonhole $\mathcal{Q}$ to obtain a set $\mathcal{Q}_3$ with the property that each $Q\in\mathcal{Q}_3$ intersects about the same number (up to a factor of 2) of prisms $\{CP\colon P\in \mathcal{P}_2\}$; each $Q\in\mathcal{Q}_3$ has the same size intersection (up to a factor of 2) with $E_L$; and \eqref{sumOverQ} remains true if the sum on the LHS is taken over $\mathcal{Q}_3$. Let $\mathcal{P}_3\subset\mathcal{P}_2$ denote the set of prisms $P\in \mathcal{P}_2$ so that $CP$ intersects some cube from $\mathcal{Q}_3$. Then \eqref{P1PreservesMass} becomes
\begin{equation}\label{P1PreservesMassP3}
\sum_{P\in\mathcal{P}_3}\sum_{\ell'\in L'}|Y'(\ell')\cap CP| \gtrapprox \sum_{\ell'\in L'}|Y'(\ell')|,
\end{equation}
and \eqref{sumOverQ} becomes
\begin{equation}\label{sumOverQ3}
\sum_{Q\in\mathcal{Q}_3}\Big|Q\cap E_L \cap \bigcup_{P\in\mathcal{P}_3}CP\Big| \gtrapprox  \lambda^3 \delta^2 M(\#\mathcal{U}_2).
\end{equation}
Since $t\leq\delta/t$, each $Q\in\mathcal{Q}_3$ intersects at most two tube segments from $\mathcal{U}$, and hence intersects at most $4M$ prisms from $\mathcal{P}_3$. Since each of these cube-prism intersections has size $O(\delta t^2)$, we conclude that 
\begin{equation}\label{bdPrismCubeIntersection}
\Big|Q\cap E_L \cap \bigcup_{P\in\mathcal{P}_3}CP\Big|\leq \Big|Q\cap \bigcup_{P\in\mathcal{P}_3}CP\Big|\leq 4M\delta t^2\quad\textrm{for each}\ Q\in\mathcal{Q}_3.
\end{equation}
Comparing \eqref{sumOverQ3} and \eqref{bdPrismCubeIntersection}, we have
\[
\#\mathcal{Q}_3\gtrapprox  \frac{ \lambda^3 \delta^2 M(\#\mathcal{U}_2)}{4M\delta t^2}= \lambda^3 \delta t^{-2} (\#\mathcal{U}_2)\gtrapprox  \lambda^4  t^{-1},
\]
where the final inequality used \eqref{boundU2}.

Since the cubes in $\mathcal{Q}_3$ intersect $N_t(\ell_0)$, we can select a line $\tilde\ell\in B(\ell_0,2t)$ that intersect a $\Omega(1)$ fraction of these cubes; fix this line $\tilde\ell$, and let $\tilde Y(\tilde\ell)$ be the union of cubes from $\mathcal{Q}_3$ that intersect $\tilde\ell$.

\medskip

It remains to prove \eqref{volumeBdEachTcube} for a suitable refinement of $(L',Y')$. For each $\ell'\in L'$, let $Y''(\ell')=Y'(\ell')\cap\bigcup_{P\in \mathcal{P}_3}CP$. Let $L''\subset L'$ be those $\ell'\in L'$ with $|Y''(\ell')|
\gtrapprox\lambda\delta^2$. If the implicit constant and power of $|\log\delta|$ is chosen appropriately, then by \eqref{P1PreservesMassP3} we have that $(L'', Y'')$ is a $\Omega^*(1)$-refinement of $(L',Y')$, in the sense that $L''\subset L'$; $Y''(\ell'')\subset Y'(\ell'')$ for each $\ell''\in L''$; the shading $Y''(\ell'')$ is $\Omega^*(\lambda)$ dense, and $\#L''\gtrapprox\#L'$.

We will now verify \eqref{volumeBdEachTcube}. If $P\in\mathcal{P}_3$ is a prism, then
\[
\Big|\pi_{\ell_0}\Big(\bigcup_{\ell''\in L''}Y''(\ell'')\cap CP\Big)\Big|\lesssim \delta^2/t \lessapprox \lambda^{-3}t^{-1}|E_L\cap CP|,
\]
where we used  Lemma \ref{projectionOfPrism} for the first inequality and \eqref{prismVolumeBd} for the second inequality. Thus
\begin{equation}\label{ElCapTildeY}
\begin{split}
| E_L \cap \tilde Y(\tilde \ell) | 
&\gtrsim \Big| E_L\cap \bigcup_{P\in\mathcal{P}_3}P\Big|\\
&\gtrsim  \sum_{P\in\mathcal{P}_3} | E_L \cap CP|\\
&\gtrapprox \lambda^3t \sum_{P\in\mathcal{P}_3}\Big|\pi_{\ell_0}\Big(\bigcup_{\ell'\in L'}Y'(\ell')\cap CP\Big)\Big|\\ 
&\gtrsim \lambda^3t \Big| \bigcup_{\ell''\in L''}\pi_{\ell_0}(Y''(\ell'')) \Big|.
\end{split}
\end{equation}
But each of the $t$-cubes $Q\subset \tilde Y(\tilde\ell)$ (of which there are at most $O(t^{-1})$) has the same size intersection with $E_L$ (within a factor of 2), and hence \eqref{ElCapTildeY} implies \eqref{volumeBdEachTcube}.
\end{proof}


\section{Circular maximal function bounds, and their consequences}\label{PYZSection}
The RHS of \eqref{volumeBdEachTcube} from Lemma \ref{fiberAboveProjectionIsLarge} is the area of a union of thickened plane curves (or more accurately, large subsets of such thickened curves). The following special case of \cite[Theorem 1.7]{PYZ} allows us to bound (from below) the area of this type of set. 
\begin{thm}\label{PYZThm}
For all $\eps>0$, there exists $c_{\eps}>0$ so that the following holds. Let $\delta>0$, let $K\geq 1$, and let $F\subset B(0,R)\subset\RR^3$ satisfy the one-dimensional ball condition
\begin{equation}\label{KBallCondition}
\#(F\cap B)\leq K(r/\delta)\quad\textrm{for all balls}\ B\subset\RR^3\ \textrm{of radius}\ r\geq\delta. 
\end{equation}
For each  $f=(a,b,c)\in F$, let $f^\delta$ be the $\delta$-neighborhood of the graph of the function $ax^2 + bx+c$ above $[-R,R]$ and let $Y(f)\subset f^\delta$, with $|Y(f)|\geq\lambda\delta$. Then
\begin{equation}\label{volumeBd}
\Big|\bigcup_{f\in F}Y(f)\Big| \geq c_{\eps}\delta^\eps  \lambda^3 K^{-1} (\delta\#F).
\end{equation}
\end{thm}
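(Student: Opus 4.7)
The plan is to derive Theorem \ref{PYZThm} by reduction to the general incidence result \cite[Theorem 1.7]{PYZ}. That theorem is formulated for any smooth one-parameter family of $C^2$ plane curves satisfying a \emph{cinematic curvature} hypothesis, together with a density shading and a Frostman-type non-concentration condition on the parameter set. The first step is to verify that the parabolic family $\{y=ax^2+bx+c : (a,b,c)\in F\}$ and the data $(F,K,Y)$ fit this framework. The parametrization $(a,b,c)\mapsto (x\mapsto ax^2+bx+c)$ is smooth with Lipschitz bounds depending only on $R$; two distinct parabolas meet in at most two points; and tangency at a point $x_0$ forces the coefficient defect $(a-a',b-b',c-c')$ to be a scalar multiple of $(1,-2x_0,x_0^2)$. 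This is precisely the cinematic curvature hypothesis used in \cite{PYZ}. The ball condition \eqref{KBallCondition} supplies the non-concentration hypothesis with dimension $s=1$ and constant $K$, and the assumption $|Y(f)|\geq\lambda\delta$ supplies the shading density.

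For completeness I sketch the internal argument. After standard dyadic pigeonholing, each $Y(f)$ is refined to a two-ends (regular) shading at a cost of only $O^*(1)$. Setting $g=\sum_{f\in F}\mathbf{1}_{Y(f)}$, Cauchy--Schwarz gives
\[
\Big|\bigcup_{f\in F}Y(f)\Big| \;\geq\; \frac{\|g\|_{L^1}^2}{\|g\|_{L^2}^2} \;\gtrsim\; \frac{(\lambda\delta\,\#F)^2}{\sum_{f,f'\in F}|Y(f)\cap Y(f')|},
\]
so the task reduces to bounding pairwise intersections $\sum_{f'}|Y(f)\cap Y(f')|$ for a typical $f\in F$.

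The sum naturally splits into transverse and tangent contributions. For a \emph{transverse} pair, two parabolas with well-separated parameters cross at a bounded angle, so $|f^\delta\cap(f')^\delta|\lesssim \delta^2/d(f,f')$; summing over $f'$ and using \eqref{KBallCondition} gives an acceptable contribution. The remaining \emph{tangent} pairs---those for which $f^\delta$ and $(f')^\delta$ overlap on an interval of length much larger than $\delta$---force $(a-a',b-b',c-c')$ to lie within $O(\delta)$ of the two-dimensional cone $\{t(1,-2x_0,x_0^2):t,x_0\in\RR\}$ inside $\RR^3$. Controlling their count is the heart of the matter, and is where \cite{PYZ} invokes an incidence-geometric analogue of Wolff's tangency lemma for circles, combined with \eqref{KBallCondition}. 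Plugging the result into the Cauchy--Schwarz display yields \eqref{volumeBd} with the cubic dependence $\lambda^3$ (one factor from $\|g\|_{L^1}^2$ and one from the two-ends refinement of the dominant shading at a tangency), while $\delta^\eps$ absorbs the logarithmic pigeonholing losses and $K^{-1}$ is inherited directly from the ball condition.

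The main obstacle is the tangent-pair count: the pointwise estimate $|f^\delta\cap(f')^\delta|$ is too crude in this regime, and bounding the number of such pairs requires the interplay between the one-dimensional non-concentration of $F$ and the two-dimensional tangency cone in $\RR^3$. Cinematic curvature makes this step accessible by ensuring that near-tangency is genuinely captured by proximity to the cone, and this is ultimately why \cite{PYZ} uses genuine incidence-geometric tools rather than Fourier or decoupling methods.
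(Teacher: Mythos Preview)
Your reduction is the same one the paper makes: invoke \cite[Theorem~1.7]{PYZ} after observing that the parabolas $y=ax^2+bx+c$ form a cinematic family (the paper records this as Remark~3 following the theorem). The paper also notes (Remark~1) that \cite{PYZ} is stated as an $L^{3/2}$ bound for $\sum_{f}\chi_{f^\delta}$, which is equivalent to \eqref{volumeBd} by standard arguments; your Cauchy--Schwarz sketch is a coarser stand-in for this conversion, though note that Cauchy--Schwarz alone only yields $\lambda^2$, and the sharp $\lambda^3$ really does come from the $L^{3/2}$ estimate rather than the two-ends refinement you describe.

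There is one genuine omission. You write that ``$K^{-1}$ is inherited directly from the ball condition,'' but \cite[Theorem~1.7]{PYZ} is only stated for non-concentration constant $K=\delta^{-\eps}$, not for arbitrary $K\geq 1$. The paper closes this gap (Remark~2) by random sampling: keep each $f\in F$ independently with probability $K^{-1}$, so that with high probability the thinned set $F'$ satisfies $\#F'\gtrsim K^{-1}\#F$ and obeys \eqref{KBallCondition} with $\delta^{-\eps}$ in place of $K$. Applying \cite{PYZ} to $F'$ then gives \eqref{volumeBd}, with the factor $K^{-1}$ arising from $\#F'$. You should insert this step.
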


\medskip

\noindent\emph{Remarks}\\
{\bf 1.} Theorem 1.7 from \cite{PYZ} gives an upper bound for the $L^{3/2}$ norm of $\sum_{f\in F}\chi_{f^\delta}$, but this estimate is equivalent to \eqref{volumeBd} by standard arguments.\\
{\bf 2.} Theorem 1.7 from \cite{PYZ} assumes a slightly different form of the ball condition \eqref{KBallCondition}, where $K=\delta^{-\eps}$. However the case for general $K$ can be obtained by randomly sampling each curve $f\in F$ with probability $K^{-1}$. With high probability, the new family $F'$ will satisfy $\#F'\gtrsim K^{-1}\#F$, and it will satisfy the ball condition \eqref{KBallCondition} with $\delta^{-\eps}$ in place of $K$. \\
{\bf 3.} Theorem 1.7 from \cite{PYZ} applies to a more general class of $C^2$ curves that form a family of \emph{cinematic functions}. For our purposes, it suffices to only consider degree-two polynomials; the set of degree-two polynomials with coefficients in $B(0,R)\subset\RR^3$ is the ``model case'' for a family of cinematic functions. 

\medskip

Theorem \ref{PYZThm} requires that our points satisfy a one-dimensional ball condition.  The next result says that an arbitrary set can be decomposed into pieces that satisfy this condition

\begin{lem}\label{oneDimDecompositionLem}
Let $X\subset\RR^d$ be $\delta$-separated and finite, and let $K\geq 1$. Then we can partition $X = Y\sqcup Z$, and $Z$ can be further decomposed $Z=Z_1\sqcup\ldots Z_N$ so that the following holds. 
\begin{itemize}
\item 
\begin{equation}\label{X1OneDimBall}
\sup_{r\geq\delta,\ x\in \RR^d} \frac{\#\big(B(x,r)\cap Y\big)}{r/\delta}\leq K.
\end{equation}
\item  Each set $Z_i$ is contained in a ball of radius $r_i\gtrsim \delta K^{1/d}$, and each $Z_i$ has a subset $Z_i'$ of size $\#Z_i'\sim r_i/\delta$, with
\begin{equation}\label{X2iOneDimBall}
\sup_{r\geq\delta,\ x\in \RR^d}\frac{|B(x,r)\cap Z_i'|}{r/\delta}\lessapprox 1.
\end{equation}
\end{itemize}
\end{lem}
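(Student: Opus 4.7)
The plan is to peel heavy balls off $X$ greedily, extracting at the \emph{smallest} scale of violation, and leave the rest as $Y$. Set $X_0 := X$; inductively for $i \geq 1$, let $r_i$ be the smallest $r \geq \delta$ for which some ball $B(x, r)$ satisfies $\#(X_{i-1} \cap B(x, r)) > Kr/\delta$. If no such $r$ exists, stop and set $Y := X_{i-1}$. Otherwise I fix such a ball $B(x_i, r_i)$, enlarge by a factor of $2$ to buffer boundary effects, and put $Z_i := X_{i-1} \cap B(x_i, 2r_i)$ and $X_i := X_{i-1} \setminus Z_i$. Since $X$ is finite the process terminates, producing a partition in which $Y$ satisfies \eqref{X1OneDimBall} by construction.

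\medskip

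Extracting at the minimal violating scale forces a form of \emph{regularity inheritance}. For every $s < r_i$ and every ball $B(y, s)$, minimality gives $\#(X_{i-1} \cap B(y, s)) \leq Ks/\delta$, and this bound is inherited by $Z_i \subset X_{i-1}$. Covering $B(x_i, 2r_i)$ by $O_d(1)$ balls of radius $r_i$ and summing then yields the total count $\#Z_i \lesssim_d K r_i/\delta$. Combining the extraction lower bound $\#Z_i > K r_i/\delta$ with the volume-packing upper bound $\#Z_i \leq C_d (r_i/\delta)^d$ for a $\delta$-separated set in a ball of radius $O(r_i)$, I obtain $(r_i/\delta)^{d-1} \gtrsim K$, hence $r_i \gtrsim \delta K^{1/(d-1)} \geq \delta K^{1/d}$, which is the desired radius bound.

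\medskip

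To build $Z_i'$ I would sample each point of $Z_i$ independently with probability $p := c_d (r_i/\delta)/\#Z_i \sim 1/K$, tuned so that $\mathbb{E}|\widetilde Z_i| \sim r_i/\delta$. For any ball $B(y, s)$, the regularity from the previous step gives $\mathbb{E}|\widetilde Z_i \cap B(y, s)| \leq p \cdot K s/\delta \lesssim s/\delta$ when $s \leq r_i$; when $s \geq r_i$, the total bound $\#Z_i \lesssim K r_i/\delta$ together with $s/\delta \geq r_i/\delta$ gives the same estimate. A Chernoff bound applied to each of the $\operatorname{poly}(1/\delta)$ relevant dyadic balls (centered on the $\delta$-grid and ranging over dyadic radii), together with a union bound, shows that with high probability $\widetilde Z_i$ simultaneously has $\gtrsim r_i/\delta$ points and satisfies the one-dimensional ball condition \eqref{X2iOneDimBall} with a polylogarithmic $\lessapprox$-constant. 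Thinning $\widetilde Z_i$ to exactly $\lceil r_i/\delta \rceil$ points gives $Z_i'$.

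\medskip

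The main obstacle, and the reason for extracting at the minimal violating scale, is that without the sub-scale regularity from the second step one cannot control the random sample uniformly at every scale: a generic heavy ball $Z_i$ extracted at an arbitrary scale might be ``clumpy'' below $r_i$, forcing $\mathbb{E}|\widetilde Z_i \cap B(y, s)|$ to be much larger than $s/\delta$ at intermediate $s$. A purely deterministic construction via projection onto a well-chosen line would likewise struggle when $Z_i$ happens to concentrate in a subset transverse to every candidate direction, so either the minimal-scale extraction with randomization outlined above or an equivalent multi-scale pigeonholing across dyadic levels appears to be unavoidable.
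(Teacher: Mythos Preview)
Your proof is correct and follows essentially the same strategy as the paper: greedy extraction of heavy balls at (near-)minimal radius so that sub-scale regularity is inherited, followed by random sampling to build $Z_i'$; the paper extracts a ball whose density ratio is within a factor $2$ of the supremum at ``almost minimal'' radius (thereby sidestepping the question of whether the exact smallest violating $r$ is attained), but this is a cosmetic variation of your scheme. One small slip: when bounding $\#Z_i \lesssim_d K r_i/\delta$, cover $B(x_i, 2r_i)$ by $O_d(1)$ balls of radius $r_i/2$ rather than $r_i$, since your minimality hypothesis only controls radii strictly below $r_i$.
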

\begin{proof}
Begin by setting $Y=X$. If \eqref{X1OneDimBall} is satisfied then set $Z = \emptyset$ and we are done. If not, then set $i=1$ and let $B(x_i,r_i)$ be a ball satisfying
\begin{equation}\label{almostMinimalBall}
\frac{|B(x_i,r_i)\cap Y|}{r_i/\delta} \geq \frac{1}{2}\sup_{r\geq\delta,\ x\in \RR^d} \frac{\#\big(B(x,r)\cap Y\big)}{r/\delta}.
\end{equation}
We choose a ball of ``almost minimal'' diameter, in the sense that no ball of radius $\leq r_i/2$ satisfies \eqref{almostMinimalBall}. Define $Z_i=Y\cap B(x_i,r_i)$, and replace $Y$ with $Y\backslash Z_i$. Increment $i$ by one and continue this process until \eqref{X1OneDimBall} is satisfied; this must eventually occur since $X$ is finite and at least one point is removed at each step. Define $Z=\bigsqcup Z_i$.

We will verify that the sets $\{Z_i\}$ satisfy their claimed properties. Since $\frac{\big(B(x_i,r_i)\cap Y\big)}{r_i/\delta}\geq \frac{K}{2}$, we must have $r_i\gtrsim \delta K^{1/d}$. Next, by the minimality of $r_i$, we have
\begin{equation}\label{RiBallWorst}
\sup_{\delta\leq r\leq r_i,\ x\in \RR^d}\frac{\#\big(B(x,r)\cap Z_i\big)}{r/\delta}\lesssim \frac{\#Z_i}{r_i/\delta}.
\end{equation}
But since $Z_i$ is contained in a ball of radius $r_i$, \eqref{RiBallWorst} remains true if the supremum on the LHS is allowed to range over all $r\geq\delta$. 

Let $Z_i'\subset Z_i$ be chosen by randomly selecting each element with probability $\frac{r_i}{\delta(\#Z_i)}$. Then with high probability we have $\#Z_i'\sim r_i/\delta$, and $Z_i'$ satifies \eqref{X2iOneDimBall}.
\end{proof}

Armed with Lemma \ref{oneDimDecompositionLem}, we can now apply Theorem \ref{PYZThm} to understand the RHS of \eqref{volumeBdEachTcube}. The precise statement is as follows. 

\begin{prop}\label{fatterTubesAlmostFullLem}
For all $\eps>0$, there exists $c>0$ so that the following holds. 
Let $\delta\in(0,1]$, let $r\in [4\delta,1]$, and let $t\in [\delta, (\delta r)^{1/2}]$.
Let $(L,Y)_\delta$ and $(L',Y')_\delta$ be sets of $\delta$-separated lines with regular, $\lambda$-dense shadings. Suppose that $L'\subset L\cap B$, for some ball $B = B(\ell_0, t)\subset \mathcal L_{SL_2}$. Suppose that $Y'(\ell')\subset \{x \colon r\leq r_L(x)\leq 2r\}$ for each $\ell'\in L'$. Let $K\geq 1$. Then at least one of the following must happen.
\begin{itemize}
    \item[(A)] There is a line $\tilde \ell\in B(\ell_0,2t)$ and a $\Omega^*(\lambda^4)$-dense shading $\tilde Y$ of $\tilde\ell$ by $t$-cubes, with 
    \begin{equation}\label{wellSpreadCase}
        | Q \cap E_{L}| \geq c\delta^{\eps} K^{-1}  \lambda^6  \Big( (\delta/t) \# L'\Big)|Q|\quad\textrm{for each\ $t$-cube}\ Q\subset \tilde Y(\tilde\ell).
    \end{equation}

    \item[(B)] There exists $\tau\in [\delta K^{1/10}, 1]$ and an arrangement $(\tilde L, \tilde Y)_{\tau}$ of lines with a $\Omega^*(\lambda^4)$-dense shading. For each $\tilde\ell\in\tilde L$ we have
    \begin{equation}\label{clusteredCase}
    | Q \cap E_{L}| \geq c\delta^{\eps}\lambda^6 |Q|\quad\textrm{for each\ $\tau$-cube}\ Q\subset \tilde Y(\tilde\ell).
    \end{equation}
    Finally, $\Omega^*(\# L')$ of the lines $\ell'\in L'$ satisfy $d(\ell',\tilde\ell)\leq 4\tau$, for some $\tilde\ell\in\tilde L$. 
\end{itemize}

\end{prop}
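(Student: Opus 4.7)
The plan is to transfer the problem to one about degree-two plane curves via the twisted projection, apply the geometric decomposition of Lemma \ref{oneDimDecompositionLem} to dichotomize between a ``well-spread'' and a ``clustered'' regime for the parameter space of $L'$, and then combine Lemma \ref{fiberAboveProjectionIsLarge} with Theorem \ref{PYZThm} in each regime. Concretely, I would first use the bilipschitz map $F_{\ell_0}$ from Lemma \ref{preserveSpacing} to identify $L'$ with a $\delta$-separated set $X = F_{\ell_0}(L') \subset \RR^3$, and then apply Lemma \ref{oneDimDecompositionLem} to $X$ with the given parameter $K$. This produces a decomposition $X = Y \sqcup Z_1 \sqcup \cdots \sqcup Z_N$ in which $Y$ satisfies \eqref{KBallCondition} with constant $\sim K$, and each $Z_i$ lies in a ball of radius $r_i \gtrsim \delta K^{1/3}$ and contains a subset $Z_i'$ of size $\sim r_i/\delta$ satisfying \eqref{KBallCondition} with constant $\lessapprox 1$. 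A two-step pigeonholing---first $Y$ against the $Z_i$'s, and then on the dyadic scale of the surviving $r_i$---isolates one of two alternatives: (i) $\#Y\gtrapprox\#L'$; or (ii) there is a dyadic scale $\tau\gtrsim\delta K^{1/3}$ (so $\tau\geq\delta K^{1/10}$) such that the clusters $Z_i$ with $r_i\sim\tau$ jointly account for $\gtrapprox\#L'$ of the points in $X$.

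Alternative (i) yields conclusion (A). Letting $L''\subset L'$ denote the lines parametrized by $Y$, I would apply Lemma \ref{fiberAboveProjectionIsLarge} to $(L'',Y'|_{L''})$ to obtain a line $\tilde\ell\in B(\ell_0,2t)$ and an $\Omega^*(\lambda^4)$-dense shading $\tilde Y(\tilde\ell)$ by $t$-cubes with
\[
|E_L\cap Q|\gtrapprox \lambda^3 t^2\Big|\bigcup_{\ell''\in L''}\pi_{\ell_0}(Y'(\ell''))\Big|
\]
for each $t$-cube $Q\subset\tilde Y(\tilde\ell)$. The curves $\{\pi_{\ell_0}(\ell'')\}$ are parametrized (via Lemma \ref{twistedProjectionOfSL2Line}) by $F_{\ell_0}(L'')\subset Y$, which still satisfies \eqref{KBallCondition} with constant $\sim K$. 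Since the first coordinate of $\pi_{\ell_0}$ is just the $z$-coordinate, distinct $\delta$-cubes of $Y'(\ell'')$ project to essentially disjoint $\delta\times\delta$ squares, giving $|\pi_{\ell_0}(Y'(\ell''))|\gtrsim\lambda\delta$. Theorem \ref{PYZThm} then bounds the area of the union of projections below by $\gtrsim\delta^\epsilon\lambda^3 K^{-1}(\delta\#L'')$, and combining this with the previous display, $|Q|\sim t^3$, and $\#L''\gtrapprox\#L'$ yields \eqref{wellSpreadCase}.

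Alternative (ii) yields conclusion (B). For each qualifying cluster, let $L'_i\subset L'$ denote the $\sim\tau/\delta$ lines corresponding to $Z_i'$, and pick an arbitrary $\bar\ell_i\in L'_i$; the diameter of $Z_i$ and the bilipschitz bound then give $L'_i\subset B(\bar\ell_i,C\tau)$ for an absolute constant $C$. I would apply Lemma \ref{fiberAboveProjectionIsLarge} with $t$ replaced by $\tau$ (valid since $\tau\lesssim t\leq(\delta r)^{1/2}$) and $\ell_0$ replaced by $\bar\ell_i$, producing a line $\tilde\ell_i\in B(\bar\ell_i,2\tau)$ with an $\Omega^*(\lambda^4)$-dense shading $\tilde Y_i(\tilde\ell_i)$ by $\tau$-cubes satisfying the analogue of the previous display (with $\tau$ and $\pi_{\bar\ell_i}$). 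Since $F_{\bar\ell_i}\circ F_{\ell_0}^{-1}$ is bilipschitz, $F_{\bar\ell_i}(L'_i)$ satisfies \eqref{KBallCondition} with constant $\lessapprox 1$, so Theorem \ref{PYZThm} gives the projected-union area $\gtrapprox\delta^\epsilon\lambda^3\delta(\#L'_i)\sim\delta^\epsilon\lambda^3\tau$, whence $|E_L\cap Q|\gtrapprox\delta^\epsilon\lambda^6\tau^3\sim\delta^\epsilon\lambda^6|Q|$. Collecting the $\tilde\ell_i$'s into $\tilde L$ with shadings $\tilde Y_i$ furnishes \eqref{clusteredCase}; since $L'_i\subset B(\bar\ell_i,C\tau)$ and $\tilde\ell_i\in B(\bar\ell_i,2\tau)$, every $\ell'\in L'_i$ lies within $4\tau$ of $\tilde\ell_i$ after absorbing $C$.

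I expect the main technical obstacle to be the careful bookkeeping of logarithmic losses arising from the multiple pigeonholing steps and from the $\Omega^*$-density refinement internal to Lemma \ref{fiberAboveProjectionIsLarge}, together with the multiplicative constants $K_0$ coming from the bilipschitzness of $F_{\ell_0}$ and $F_{\bar\ell_i}$; all of these must be absorbed cleanly into the final $\delta^\epsilon$ factor, which is accomplished by running Theorem \ref{PYZThm} with a slightly smaller exponent than the target $\epsilon$.
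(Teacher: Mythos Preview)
Your proposal is correct and follows essentially the same route as the paper: apply Lemma~\ref{oneDimDecompositionLem} with parameter $K$ to split $L'$ into a well-spread part and clustered parts, pigeonhole on which dominates, and in each case combine Lemma~\ref{fiberAboveProjectionIsLarge} with Theorem~\ref{PYZThm} to bound $|E_L\cap Q|$. The only cosmetic differences are that the paper applies Lemma~\ref{oneDimDecompositionLem} directly to $L'\subset\RR^4$ (yielding $r_i\gtrsim\delta K^{1/4}$, still $\geq\delta K^{1/10}$) rather than first pushing forward by $F_{\ell_0}$, and in Case~(B) it centers the projection at the ball-center $\ell_i$ rather than at a member $\bar\ell_i$ of the cluster; neither changes the argument.
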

\begin{rem}
Observe that Conclusion (A) is strong when $K$ is small (i.e.~close to 1), while Conclusion (B) is strong when $K$ is large. In practice, we will apply Proposition \ref{fatterTubesAlmostFullLem} for $K=\delta^{-\eps_1}$, where $\eps_1>0$ is very small compared to the quantity $\eps$ from the statement of Theorem \ref{mainThm}. 
\end{rem}
\begin{proof}
Since $L'$ is $\delta$-separated and finite, we can apply Lemma \ref{oneDimDecompositionLem} to $L'$ with parameter $K$; we obtain a partition $L' = L^* \sqcup L^{**}$, and $L^{**}$ can be further partitioned into $L^{**}_1 \sqcup \ldots \sqcup L^{**}_M$. 

\medskip

\noindent {\bf Case (A): $\#L^* \geq \frac{1}{2}\#L'$}.\\
For each $\ell\in L^*$, $\pi_{\ell_0}(\ell)$ is the graph of a polynomial $Ax^2+Bx+C$; identify $\ell$ with the point $(A,B,C)$, and let $F\subset\RR^3$ be the set of all such points, as $\ell$ ranges over the elements of $L^*$. By Lemma \ref{preserveSpacing}, $F$ satisfies the one-dimensional ball condition \eqref{KBallCondition}, with $K_0K=O(K)$ in place of $K$. 

For each $\ell\in L^*$, define $Y^*(\ell)=Y'(\ell)$. Applying Lemma \ref{fiberAboveProjectionIsLarge} with $L$ as above and $(L^*,Y^*)$ in place of $(L',Y')$, we obtain a line $\tilde\ell\in B(\ell_0,2t)$ and a $\Omega^*(\lambda^4)$-dense shading $\tilde Y(\tilde\ell)$ by $t$-cubes. After replacing $(L^*,Y^*)$ with a $\Omega^*(1)$-density refinement, we have that for each $t$-cube $Q\subset \tilde Y(\tilde\ell)$, we have
\begin{equation}\label{ElcapQComputation}
\begin{split}
|E_L\cap Q| & \gtrapprox \lambda^3  t^2 \Big| \bigcup_{\ell \in L^*} \pi_{\ell_0}(Y^{*}(\ell))\Big|\\
& \gtrsim \Big(\lambda^3  t^2\Big)\Big(c_\eps   \delta^{\eps/2}  \big(|\log\delta|^{-2}\lambda\big)^3 K^{-1} (\delta\#L^{*})\Big)\\
&\gtrapprox c_\eps \delta^{\eps/2} K^{-1}\lambda^6\Big((\delta/t)\#L'\Big)|Q|,
\end{split}
\end{equation}
where the second inequality used Theorem \ref{PYZThm}. Selecting $c>0$ sufficiently small depending on $\eps,$ $c_\eps$, and the implicit power of $|\log\delta|$ in \eqref{ElcapQComputation}, we have that \eqref{ElcapQComputation} implies  \eqref{wellSpreadCase}, and Conclusion (A) holds.

\medskip

\noindent {\bf Case (B): $\# L^{**}\geq \frac{1}{2}\#L'$}. \\
After re-indexing, we can select $M'\leq M$ and $\tau\in [\delta K^{1/10}, t]$ so that each of the sets $L^{**}_1\sqcup \ldots \sqcup L^{**}_{M'}$ are contained in balls of diameter between $\tau/2$ and $\tau$, and $\sum_{i=1}^{M'}\# L^{**}_i \gtrapprox \# L'$. Note that $\tau\leq t$, since $L'$ is contained in a ball of radius $t$. For each index $i=1,\ldots,M'$, let $L^{***}_i \subset L^{**}_i$ be a set with $\#L^{***}_i\gtrsim \tau/\delta$ that satisfies the ball condition 
\begin{equation}\label{ballConditionForTubes2i}
\#\big(L^{***}_i\cap B(x,s)\big)\lesssim (s/\delta)\quad\textrm{for all balls}\ B(x,s),
\end{equation}
and $L^{***}_i\subset B(\tilde\ell_i,2\tau)$, for some $\tilde\ell_i\in \mathcal{L}_{SL_2}$. 


Our next task is to construct the set $\tilde L$, the shading $\tilde Y$, and to verify that \eqref{clusteredCase} is satisfied. For each index $i$, we define $F_i\subset\RR^3$ to be the set of all points of the form $(A,B,C),$ where the graph of the polynomial $Ax^2+Bx+C$ is the curve $\pi_{\ell_i}(\ell)$, for some $\ell\in L^{***}_i$. The set $F_i$ satisfies the one-dimensional ball condition \eqref{KBallCondition}, with $O^*(1)$ in place of $K$. 

For each index $i=1,\ldots,M'$, apply Lemma \ref{fiberAboveProjectionIsLarge} with $L$ as above; $L_i^{***}$ in place of $L'$; $\delta$ and $r$ as above; and $\tau$ in place of $t$ (since $\delta\leq\tau\leq t$ and $t\in [\delta,(\delta r)^{1/2}]$ then the same is true for $\tau$). We obtain a line $\tilde\ell_i\in B(\ell_i,2\tau)$ and a $\Omega^*(\lambda^4 )$-dense shading $\tilde Y$ of $\tilde\ell_i$ by $\tau$-cubes. Define $\tilde L = \{\tilde\ell_i\colon i=1,\ldots,M'\}$; this is our desired set of lines $(\tilde L, \tilde Y)_\tau$. By construction, we have $d(\ell', \tilde \ell)\leq 4\tau$ for a $\Omega^*(1)$ fraction of the lines $\ell'\in L'$.

All that remains is to verify \eqref{clusteredCase}. By Lemma \ref{fiberAboveProjectionIsLarge}, after replacing $(L^{***}_i, Y')$ by a $\Omega^*(1)$ refinement, for each $\tau$-cube $Q\subset \tilde Y(\tilde\ell_i)$, we have 
\begin{equation}\label{ElcapQComputationAnalogue}
\begin{split}
|E_L\cap Q| & \gtrapprox \lambda^3  \tau^2 \Big| \bigcup_{\ell \in L_i^{***}} \pi_{\ell_i}(Y'(\ell))\Big|\\
& \gtrapprox \Big(\lambda^3 \tau^2\Big)\Big(c_\eps   \delta^{\eps/2}  \lambda^3  \big(\delta\#L^{***}_i\big)\Big)\\
&\gtrapprox c_\eps \delta^{\eps/2}\lambda^6\tau^2\big(\delta\cdot (\tau/\delta)\big)\\
&\gtrsim c_\eps\delta^{\eps/2}\lambda^6 |Q|.
\end{split}
\end{equation}
If we select $c$ sufficiently small depending on $\eps$, $c_\eps$, and the implicit power of $|\log\delta|$ in the above quasi-inequality, then \eqref{ElcapQComputationAnalogue} implies \eqref{clusteredCase}.
\end{proof}

\section{Proof of Theorem \ref{mainThm}}\label{inductionSection}

In this section we will combine Proposition \ref{fatterTubesAlmostFullLem} with induction on scales to prove Theorem \ref{mainThm}. Our induction step will go from scale $\delta$ to some scale $\tilde\delta\in [\delta^{1-\eps^4},\delta^{1/2}]$. In particular after at most $\eps^{-4}\log(1/\eps)$ iterations we will be at scale $\delta^{\eps}$, at which point we are done. At each step, we will need to consider our set of lines $L$ at both scale $\delta$ and $\tilde\delta$. At the latter scale, our lines might fail to obey the two-dimensional ball condition \eqref{ballCondition}. The next lemma helps us address this problem. 

\begin{lem}\label{nonConcentrationScaleLem}
Let $0<\delta\leq\rho\leq r\leq 1$ and let $L,\tilde L\subset\mathcal{L}_{SL_2}$. 
\begin{itemize}
    \item[(i)] Suppose that $\tilde L$ is $\rho$-separated and contained in a ball of radius $r$. Then there exists a set $\tilde L'\subset\tilde L$ with $\#\tilde L'\gtrsim (\rho/r)(\#\tilde L)$ that satisfies the two-dimensional ball condition $\#(\tilde L' \cap B(\ell,s)) \leq (s/\rho)^2$.

    \item[(ii)] Suppose that $L$ obeys the two-dimensional ball condition $L\cap B(\ell, s)\leq (s/\delta)^2$ and $L\subset \bigcup_{\tilde\ell\in \tilde L}B(\tilde\ell, \rho)$. Then for all $\eps>0$, there exists $c=c(\eps)>0$ and a set $\tilde L'\subset\tilde L$ with $\#\tilde L'\gtrsim c \delta^\eps (\delta/\rho)^2(\# L)$ that satisfies the two-dimensional ball condition $\#(\tilde L' \cap B(\ell,s)) \leq (s/\rho)^2$.
\end{itemize}
\end{lem}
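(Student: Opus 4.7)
The plan rests on one structural observation: $\mathcal{L}_{SL_2}$ is a smooth three-dimensional subvariety of $\RR^4$ (cut out by $ad-bc=1$), so, after restricting to $B(0,R)$, any $\rho$-separated subset of $\mathcal{L}_{SL_2}\cap B(\ell_0,s)$ has cardinality $\lesssim (s/\rho)^3$. In particular, the set $\tilde L$ in part (i) and the refined sets we construct in part (ii) automatically obey a three-dimensional ball condition at scale $\rho$. It is this gain of one dimension (three rather than four) that accounts for the factor $\rho/r$ (rather than $(\rho/r)^2$) in part (i).

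For part (i), I would sample each element of $\tilde L$ independently with probability $p=c\rho/r$, where $c>0$ is a small absolute constant. The expected size of $\tilde L'$ is $(c\rho/r)\#\tilde L$. For any ball $B(\ell,s)$ with $s\in[\rho,r]$ the expected size of $\tilde L'\cap B(\ell,s)$ is at most $p\cdot C(s/\rho)^3=Cc\,(s/r)(s/\rho)^2\leq Cc\,(s/\rho)^2$, and for $s\geq r$ the same bound follows from the case $s=r$. A Chernoff inequality together with a union bound over the $\lesssim(r/\rho)^3$ possible ball centers and the $\lesssim|\log(r/\rho)|$ dyadic scales then yields, with positive probability, a set $\tilde L'$ of the required cardinality obeying the two-dimensional ball condition (up to a constant on the right-hand side that we absorb by shrinking $c$).

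For part (ii), first replace $\tilde L$ by a maximal $\rho$-separated subset $\tilde L_1$; every $\ell\in L$ still lies within $2\rho$ of some $\tilde\ell\in\tilde L_1$. Setting $f(\tilde\ell)=\#(L\cap B(\tilde\ell,2\rho))\leq (2\rho/\delta)^2$, one has $\sum_{\tilde\ell\in\tilde L_1}f(\tilde\ell)\geq\#L$. A dyadic pigeonhole selects an integer $N\in[1,4(\rho/\delta)^2]$ and a subset $\tilde L_2\subset\tilde L_1$ on which $f(\tilde\ell)\sim N$, with $\#\tilde L_2\gtrapprox\#L/N$. Because $\tilde L_1$ is $\rho$-separated in $\mathcal{L}_{SL_2}$, each $\ell\in L$ lies in at most $O(1)$ balls $B(\tilde\ell,2\rho)$ with $\tilde\ell\in\tilde L_1$, so for every $s\geq\rho$ the two-dimensional ball condition on $L$ gives $\#(\tilde L_2\cap B(\ell,s))\lesssim (s/\delta)^2/N$. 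I would then subsample $\tilde L_2$ with probability $p'=c'N(\delta/\rho)^2$: the expected cardinality is $\gtrapprox c'(\delta/\rho)^2\#L$, and the expected count in any ball $B(\ell,s)$ is $\lesssim c'(s/\rho)^2$. The Chernoff and union bound argument from part (i) then produces $\tilde L'$, and the $|\log\delta|$ loss incurred by the dyadic pigeonhole is absorbed into the factor $\delta^\eps$.

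I expect the main obstacle to be enforcing the ball condition uniformly over every scale and every ball center simultaneously; the Chernoff tail is comfortably small at large scales $s\gg\rho$, but at the smallest scale $s\sim\rho$ one must check that $p$ (respectively $p'$) is still large enough to close the union bound. A cleaner deterministic alternative, should the probabilistic argument become awkward, is a greedy pruning procedure that discards an element whenever its inclusion would violate some ball condition; the required accounting reduces to exactly the same dimension counting as in the probabilistic version.
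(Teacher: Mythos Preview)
Your approach is genuinely different from the paper's. The paper argues via projections to $\RR^2$: for part (i) it chooses, on each of $O(1)$ pieces of $\mathcal{L}_{SL_2}\cap B(\ell_0,r)$, a linear $V\colon\RR^4\to\RR^2$ whose fibers meet the piece in segments of length $\lesssim r$, and then keeps one point of $\tilde L$ per $\rho$-cell of the image; for part (ii) it invokes Kaufman's projection theorem to find $V\colon\RR^4\to\RR^2$ with $\#\big(V(L)\big)_{\delta\text{-sep}}\geq c\delta^\eps\#L$, and passes to a $\rho$-separated subset of the image. In both cases the two-dimensional ball condition follows immediately because $V$ is $1$-Lipschitz and the image is $\rho$-separated in $\RR^2$. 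Your route---random thinning governed by the three-dimensionality of $\mathcal{L}_{SL_2}$ and, in (ii), by a pigeonholed multiplicity $N$---is more hands-on but has the virtue of avoiding Kaufman's theorem entirely, which makes (ii) more self-contained.

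Your part (i) goes through essentially as written. The point you should make explicit is that the expected count in a ball of radius $s$ is $\lesssim c\,(s/r)(s/\rho)^2$, so the Chernoff tail is $\big(O(c)\cdot s/r\big)^{C_0(s/\rho)^2}$; the extra $(s/r)$ in the base is exactly what beats the $(r/s)^3$ in the union bound once $C_0>3$, so your worry about the smallest scale is unfounded here.

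In part (ii) that saving factor is absent: your bound $\#(\tilde L_2\cap B(\ell,s))\lesssim(s/\delta)^2/N$ gives expected count $\lesssim c'(s/\rho)^2$ with no decay in $s$, so the Chernoff tail is only $\exp(-c''(s/\rho)^2)$ while the number of $s$-balls is $\sim s^{-3}$. At $s\sim\rho$ the union bound reads $\rho^{-3}e^{-O(1)}$, which fails for any fixed implicit constant. This is precisely the obstacle you flagged. It is repairable within your framework---e.g.\ sample and then delete all points lying in any ``overfull'' ball, using a second-moment bound to show the deleted mass is $\lesssim |\log\delta|\cdot c'\cdot(\text{expected size})$, absorbed into $\delta^\eps$---but the bare Chernoff/union-bound sentence as written does not close, and this is the step where your proposal needs to be filled in. The paper sidesteps the issue entirely by pushing into $\RR^2$ via Kaufman and reading off the ball condition from $\rho$-separation of the image.
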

\begin{proof}
For the first estimate, we have $\tilde L\subset B(\ell_0, r)\subset\mathcal{L}_{SL_2}$. We divide $B(\ell_0, r)$ into $O(1)$ pieces $S_1,\ldots,S_k$, and on each piece $S_i$ we select a projection $V_i\colon\RR^4\to\RR^2$ so that each fiber of $V_i$ intersects $S_i$ in a line segment of length $\lesssim r$. This means that on each piece $S_i$, we can refine $\tilde L$ by a factor of $\rho/r$ to obtain a set $\tilde L'$ whose image under $V_i$ is $\rho$-separated, and hence obeys a two-dimensional ball condition. Since $V$ is 1-Lipschitz, we conclude that $\tilde L'$ also obeys a two-dimensional ball condition. 

For the second estimate, by Kaufman's projection theorem we can find a projection $V\colon \RR^4\to\RR^2$ so that $V(L)$ contains a $\delta$-separated set of size $\geq c\delta^{\eps}\#L$, and hence a $\rho$-sepa\-rated set of size $\geq c\delta^{\eps}(\delta/\rho)^2\#L$. Then $V(\tilde L)$ contains a $\rho$-separated set of comparable cardinality; this will be our set $\tilde L'$. Again, since $V(\tilde L')$ satisfies a two-dimensional ball condition and $V$ is 1-Lipschitz, we conclude that $\tilde L'$ also satisfies a two-dimensional ball condition. 
\end{proof}

The following result is a variant of Theorem \ref{mainThm} that has been modified to work well with induction on scales.

\begin{prop}\label{mainProp}
For all $\eps>0$, there exists $c_1=c_1(\eps)>0$ and $c_2=c_2(\eps)>0$ so that the following holds for all $0<\delta\leq\rho \leq 1$ and all $\lambda\in (0,1]$. Let $(L,Y)_\rho$ be an arrangement of lines with a $\lambda$-dense shading. Suppose that $L$ satisfies the two-dimensional ball condition 
\begin{equation}\label{ballConditionInProp}
\#(L \cap B) \leq (r/\rho)^2\quad\textrm{for all}\ r\in[\rho,1],\ \textrm{and all balls}\ B\ \textrm{of radius}\ r.
\end{equation}
Then
\begin{equation}\label{mainPropBigVolumeEstimate}
|E_L| \geq c_1 \rho^\eps \delta^{2\eps} \lambda^W (\rho^2\# L)^{1-c_2},
\end{equation}
where $W=W(\eps,\delta,\rho) = \exp\big(  \frac{100}{\eps^3} \frac{\log \rho}{ \log\delta}\big)$.
\end{prop}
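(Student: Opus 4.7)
The plan is to prove Proposition \ref{mainProp} by induction on the scale $\rho$, proceeding from coarse (near $1$) to fine (near $\delta$). We distinguish a base-case regime $\rho \geq \delta^{c_0(\eps)}$---where $W(\eps,\delta,\rho)$ is bounded by a constant depending only on $\eps$ and the trivial estimate $|E_L|\geq \lambda\rho$ (together with the two-dimensional ball condition) already implies the desired bound for $c_1,c_2$ chosen small---from the inductive regime $\rho<\delta^{c_0(\eps)}$, in which we assume the claim at every strictly coarser scale $\tilde\rho\in(\rho,1]$.

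For the inductive step, we perform standard reductions: regularize each shading using Lemma \ref{lemRegularShading}, and dyadic-pigeonhole to fix both a common shading density $\lambda$ and a common angular scale $r\in[\rho,1]$ so that $Y(\ell)\subset\{x:r\leq r_L(x)\leq 2r\}$. The associated $|\log\rho|^{O(1)}$ losses are absorbed by $\lessapprox$. Set the intermediate scale $t:=(\rho r)^{1/2}\in[\rho,\rho^{1/2}]$ and the non-concentration parameter $K:=\rho^{-\eps_1}$ with a small constant $\eps_1=\eps_1(\eps)>0$ to be chosen. Cover $L$ by $t$-balls $\{B_i\}$ in $\mathcal{L}_{SL_2}$, apply Proposition \ref{fatterTubesAlmostFullLem} (with $\rho$ in place of $\delta$) to each $L\cap B_i$, and further pigeonhole to assume all relevant $B_i$ land in the same case of the dichotomy.

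Each case produces an arrangement at a strictly coarser scale $\tilde\rho\in(\rho,1]$ (namely $\tilde\rho=t$ in Case (A) and $\tilde\rho=\tau\geq \rho^{1-\eps_1/10}$ in Case (B)) with an $\Omega^*(\lambda^4)$-dense shading $\tilde Y$, together with a pointwise inequality $|Q\cap E_L|\geq c\rho^\eps\lambda^6(\cdot)|Q|$ on each shading $\tilde\rho$-cube $Q$ (with an extra $K^{-1}(\rho/t)\#L_i$ factor in Case (A)). After applying Lemma \ref{nonConcentrationScaleLem} to extract a sub-arrangement satisfying the two-dimensional ball condition at the coarser scale $\tilde\rho$ (losing only a $\rho^{\eps'}(\rho/\tilde\rho)^2$ factor in the line count via Kaufman's projection theorem), invoking the inductive hypothesis at scale $\tilde\rho$, and combining it with the pointwise inequality, one obtains the target bound at scale $\rho$, subject to a numerical inequality of the shape $4W(\eps,\delta,\tilde\rho)+6\leq W(\eps,\delta,\rho)$.

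The principal obstacle is the parameter bookkeeping: the exponent $W$ is engineered as an exponential in $\log\rho/\log\delta$ precisely so that $\tilde\rho\geq\rho^{1-\eps_1/10}$ forces $W(\eps,\delta,\tilde\rho)\leq W(\eps,\delta,\rho)^{1-\eps_1/10}$, which in the inductive regime $\rho<\delta^{c_0(\eps)}$ absorbs the additive $+6$ loss arising from the $\lambda^6\cdot\lambda^{4W(\eps,\delta,\tilde\rho)}$ compounding at each step. The constants $c_0,\eps_1,c_1,c_2$ must be threaded consistently---with $\eps_1\asymp\eps^2$ and $c_0\asymp\eps$ being natural choices---so that both cases close simultaneously and the recursion terminates after $O(\eps^{-4}\log(1/\eps))$ steps, producing the final constants $c_1,c_2$ depending only on $\eps$.
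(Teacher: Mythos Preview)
Your outline captures the paper's architecture correctly---pigeonhole on the angular scale $r$, set $t=(\rho r)^{1/2}$, apply Proposition~\ref{fatterTubesAlmostFullLem} inside each $t$-ball, and close by induction at a coarser scale via Lemma~\ref{nonConcentrationScaleLem}. However, you are missing one structural ingredient that the paper needs to close Case~(A), and without it the numerics do not work.

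The issue is this. In Case~(A) the pointwise gain on each $t$-cube is $\approx K^{-1}\lambda^6(\rho/t)M$, where $M\sim\#(L\cap B_i)$. You propose to use part~(ii) of Lemma~\ref{nonConcentrationScaleLem} (Kaufman) to produce $\tilde L'$ with $\#\tilde L'\gtrsim\rho^{\eps'}(\rho/t)^2\#L$, hence $t^2\#\tilde L'\gtrsim\rho^{\eps'}\rho^2\#L$. Combining with the inductive bound at scale $t$ and the pointwise gain yields
\[
|E_L|\ \gtrsim\ (\rho/t)\,M\cdot c_1 t^{\eps}\delta^{2\eps}\lambda^{\cdots}(\rho^2\#L)^{1-c_2},
\]
so against the target you need $(\rho/t)M\cdot(t/\rho)^{\eps}\gtrsim 1$. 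But $M$ can be as small as $1$ (nothing forbids the lines in $L$ from already being $t$-separated), and when $r\sim 1$ so that $t\sim\rho^{1/2}$, this leaves an uncompensated loss of order $\rho^{(1-\eps)/2}$. The Kaufman refinement only controls the \emph{count} of coarse lines, not the per-ball multiplicity $M$ that appears in the pointwise factor, so the two do not cancel.

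The paper repairs this with an extra step you omit: a secondary induction on $\#L$ (at the \emph{same} scale $\rho$) that reduces to a subset $L_4$ contained in a single ball of radius $r$ with $\#L_4\geq\rho^{c_2}\#L$. This reduction exploits the near-disjointness \eqref{boundedOverlap} of the sets $E_{L_3^B}$ across $r$-balls (a consequence of having pigeonholed $r_L(x)\sim r$), together with the convexity of $x\mapsto x^{1-c_2}$. Once $L_4$ sits in an $r$-ball, one can use part~(i) of Lemma~\ref{nonConcentrationScaleLem} (not Kaufman), which loses only $(t/r)$. Since $t^2=\rho r$, one then has the clean identity $(\rho/t)\cdot t^2\cdot(t/r)=\rho^2$, so the $M$-dependence cancels and Case~(A) closes. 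You should also dispatch the case $r<\rho^{1-\eps}$ directly (as in the paper's Step~2, using $|E_L|\gtrsim(\rho/r)^2\lambda\rho^2\#L$); otherwise $t$ may fail to be a genuinely coarser scale and the $W$-recursion stalls.
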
 

\begin{rem}\label{boundOnWRemark}
The function $W$ was chosen to have the following three properties.
\begin{itemize}
    \item For $0<\delta\leq\rho\leq 1$, $W(\eps, \delta,\rho)\leq \exp\big(\frac{100}{\eps^3})$; this bound is independent of $\delta$ and $\rho$.
    \item For $\eps,\delta\in (0,1)$ fixed, $W(\eps,\delta,\rho)$ is monotone increasing for $\rho\in (0,1)$.
    \item If $\rho,\delta,\tau\in (0,1)$ and $\tau\geq \rho^{1-\eps^3/10}$, then $W(\eps,\delta, \tau) \leq \exp\big(  \frac{100}{\eps^3}\frac{\log \rho^{1-\eps^3/10} }{ \log\delta}\big) \leq \frac{1}{10} W(\eps, \delta,\rho)$.
\end{itemize}
The first property of $W$ is needed to ensure that Proposition \ref{mainProp} implies Theorem \ref{mainThm}. Indeed, by selecting $\rho=\delta$; replacing $\eps$ by $\eps/4$; and selecting $\delta_0(\eps,R)>0$ sufficiently small, we have that Proposition \ref{mainProp} implies Theorem \ref{mainThm} with $M(\eps) = \exp(100\eps^{-3})$.

Proposition \ref{mainProp} is proved using induction on scales; the second and third properties of $W$ are needed to ensure that the induction closes. 
\end{rem}
\begin{proof}[Proof of Proposition \ref{mainProp}]$\phantom{1}$\\

\noindent {\bf Step 1: Setting up the induction}\\
If $L$ is empty then there is nothing to prove. If $L$ is non-empty, then we can bound the volume of $E_L = \bigcup Y(\ell)$ by the volume of one tube. This gives us the trivial estimate
\begin{equation}\label{trivialEstimate}
|E_L| \geq \lambda\rho^2.
\end{equation}
In particular, if $\delta\geq c_1^{1/\eps}$ then \eqref{trivialEstimate} implies \eqref{mainPropBigVolumeEstimate}. Thus by choosing $c_1(\eps)>0$ appropriately, we may assume that $\delta>0$ is very small compared to $\eps$. For $\delta>0$ fixed, we will prove the result by induction on $\rho$. If $\rho\geq \delta^{\eps}$ and $L$ is non-empty, then \eqref{mainPropBigVolumeEstimate} follows from \eqref{trivialEstimate}. 

In particular, by selecting $c_1$ appropriately, we can suppose that $\rho>0$ is small enough that
\begin{equation}\label{powersOfLogDominatedExponent}
|\log\rho|^{-J}\geq \rho^{1/J},\quad J = \max\Big\{ c_2^{-2},\ \exp\big(\frac{100}{\eps^3}\big)\Big\}.
\end{equation} 
(In practice, we can choose $c_2(\eps)= \eps^{6}$, so there is no circular dependency between $c_1$ and $c_2$). Since $\delta\leq\rho$, the same inequality holds with $\delta$ in place of $\rho$.  A consequence of \eqref{powersOfLogDominatedExponent} is that expressions such as $|\log\rho|^{-W(\eps, \delta,\rho)}$ are bounded below by $\rho^{c_2^2}$. 

For $\delta$ and $\rho$ fixed, we will prove Proposition \ref{mainProp} by induction on $\#L$. If $\#L\leq\rho^{-\eps/2}$ and $L$ is non-empty, then \eqref{mainPropBigVolumeEstimate} follows from the trivial estimate \eqref{trivialEstimate}. Henceforth we will suppose that $\#L>\rho^{-\eps/2}$.

\medskip
\noindent{\bf Step 2: Finding a typical angle $r$.}\\
Apply Lemma \ref{lemRegularShading} to each $\ell\in L$, and let $Y_1(\ell)\subset Y(\ell)$ be a $\lambda/2$-dense regular shading. Define $L_1 = L$. 
Next, by dyadic pigeonholing there exists $r\in [\rho,1]$ so that if we define 
\[
Y_2(\ell) = \{x \in Y_1(\ell)\colon r \leq r_{L_1}(x) \leq 2r\},
\] 
(here $r_L(x)$ is given by Definition \ref{defnRx}), then 
\[
\sum_{\ell\in L}|Y_2(\ell)|\geq|\log\rho|^{-1}\lambda\rho^2\#L.
\] 
Fix this choice of $r$, and define 
\[
L_2 = \big\{\ell \in L_1\colon |Y_2(\ell)|\geq \frac{1}{2}|\log\rho|^{-1}\lambda \rho^2\big\}.
\]
We have
\begin{equation}\label{cardBdL2}
\#L_2 \geq \frac{1}{4}|\log\rho|^{-1}\#L.
\end{equation}
Since $\#L_2(x) \lesssim (r/\rho)^2$ at each point $x\in\RR^3$, we have 
\begin{equation}\label{estimateVerySmallR}
\Big| \bigcup_{\ell \in L_2}Y_2(\ell) \Big| \gtrsim (\rho/r)^2 \sum_{\ell\in L_2}|Y_2(\ell)|\gtrsim (\rho/r)^2 |\log\rho|^{-3}\lambda (\rho^2 \#L).
\end{equation}

In particular, using \eqref{powersOfLogDominatedExponent} to simplify our expression, we conclude that either
\begin{equation}\label{lowerBoundOnR}
r\geq \rho^{1-\eps},
\end{equation}
or else \eqref{mainPropBigVolumeEstimate} follows from \eqref{estimateVerySmallR} and we are done. We will assume henceforth that \eqref{lowerBoundOnR} holds. 

Apply Lemma \ref{lemRegularShading} to each $\ell\in L_2$, and let $Y_3(\ell)\subset Y_2(\ell)$ be a $\frac{1}{4}|\log\rho|^{-1}\lambda$-dense regular shading. Define $L_3 = L_2$. 

\medskip

\noindent{\bf Step 3: A large subset $L_4\subset L_3$ is in a single $1\times r$-tube.}\\
Let $\mathcal{B}$ be a set of $O(r^{-3})$ boundedly overlapping balls of radius $r$ whose union covers $\mathcal{L}_{SL_2}$. For each $B\in\mathcal{B}$, let $L_3^B= L_3\cap B$. 
For each point $x\in E_{L_3}$ we have $r_{L_3}(x)\in [r,2r]$, and hence there are $O(1)$ balls $B\in\mathcal{B}$ with $x\in E_{L_3^B}$. Thus 
\begin{equation}\label{boundedOverlap}
\Big| \bigcup_{\ell \in L_3}Y_3(\ell) \Big| \gtrsim \sum_{B\in\mathcal{B}} \Big| \bigcup_{\ell \in L_3^B}Y_3(\ell) \Big|. 
\end{equation}
Let $N = \sup_{B\in\mathcal{B}} \#L_3^B$. By our induction hypothesis we have
\begin{equation}\label{gainFromSmallN}
\begin{split}
\sum_{B\in\mathcal{B}} \Big| \bigcup_{\ell \in L_3^B}Y_3(\ell) \Big| & \geq c_1\rho^\eps \delta^{2\eps} \big( \frac{1}{4}|\log\rho|^{-1}\lambda\big)^{W(\eps,\delta,\rho)}\sum_{B\in\mathcal{B}}(\rho^2\#L_3^B)^{1-c_2}\\
& \geq c_1\rho^\eps \delta^{2\eps}\lambda^{W(\eps,\delta,\rho)} \big( \frac{1}{2}|\log\rho|^{-1}\big)^{W(\eps,\delta,\rho)}(\rho^2\#L_3)^{1-c_2} \big(\frac{\#L_3}{N}\big)^{c_2}\\
&\geq c_1\rho^\eps \delta^{2\eps}\lambda^{W(\eps,\delta,\rho)} (\rho^2\# L)^{1-c_2} \Big( (2|\log\rho|)^{-W(\eps,\delta,\rho)-1} \big(\frac{\#L}{N}\big)^{c_2} \Big),
\end{split}
\end{equation}
where the final line used \eqref{cardBdL2}. Using \eqref{powersOfLogDominatedExponent} to analyze the final term in brackets, we conclude that either there is a set $L_4$ of the form $L_4= L_3^{B_0}$ with
\begin{equation}\label{lowerBoundOnN}
N = \#L_4 \geq \rho^{c_2}(\#L),
\end{equation}
or else \eqref{mainPropBigVolumeEstimate} follows from \eqref{boundedOverlap} and \eqref{gainFromSmallN} and we are done. 
We will assume henceforth that there is a set $L_4 = L_3^{B_0}$ satisfying \eqref{lowerBoundOnN}; define the shading $Y_4(\ell)=Y_3(\ell),\ \ell\in L_4$.

\medskip

\noindent{\bf Step 4: Clustering at scale $t=(\rho r)^{1/2}$.}\\
Recall that $L_4$ is contained in a ball $B_0\subset\RR^4$ of radius $r$. Let $\mathcal{B}_0$ be a covering of $B_0\cap \mathcal{L}_{SL_2}$ by boundedly overlapping balls of radius $t=(\rho r)^{1/2}$. After pigeonholing, we select an integer $M$ and a set $\mathcal{B}\subset \mathcal{B}_0$ so that 
\[
\sum_{B\in\mathcal{B}}\#(L_4\cap B)\geq|\log\rho|^{-1}\#L_4,
\] 
and 
\[
\#(B\cap L_4)\in [M, 2M)\quad\textrm{for all}\ B\in\mathcal{B}.
\] 

Define 
\[
\tilde L = \{\ell_B\colon B\in \mathcal{B}\},
\] 
where $\ell_B\in\mathcal{L}_{SL_2}$ is the center of $B\subset\RR^4$. For each $\tilde \ell \in \tilde L$, define $L_4(\tilde \ell) = L_4\cap B(\tilde\ell, t)$. Abusing notation slightly, we define the pairs $(L_4(\tilde \ell), Y_4)_\rho$, where $Y_4$ is the restriction of the shading $Y_4$ from $L_4$ to $L_4(\tilde \ell)$. We have
\begin{equation}\label{mostLinesInL3}
M(\#\tilde L) \gtrsim |\log\rho|^{-1}(\#L_4), \quad\textrm{and}\quad \#(L_4(\tilde\ell))\in [M,2M]\ \textrm{for all}\ \tilde\ell\in \tilde L. 
\end{equation}

\medskip

\noindent{\bf Step 5: Applying Proposition \ref{fatterTubesAlmostFullLem}.}\\
For each $\tilde\ell \in \tilde L$, Apply Proposition \ref{fatterTubesAlmostFullLem} with 
\begin{itemize}
\item $\rho$ in place of $\delta$; $\frac{1}{4}|\log\rho|^{-1}\lambda$ in place of $\lambda$; $r$ and $t=(\rho r)^{1/2}$ as above. 
\item $(L_1,Y_1)$ in place of $(L,Y)$.
\item $(L_4(\tilde \ell), Y_4)$ in place of $(L',Y')$.
\item $\tilde\ell$ in place of $\ell_0$
\item $K=\rho^{-\eps^3}$ and $\eps_1 = \eps^4/40$ in place of $\eps$. 
\end{itemize}
We will say that $\tilde\ell$ is of \emph{Type (A)} if Conclusion (A) holds, and is of \emph{Type (B)} if Conclusion (B) holds. 

\medskip
\noindent{\bf Case (A)}: If $\tilde\ell$ is of Type (A), then there is a shading $\tilde Y(\tilde \ell)$ that is a union of grid-aligned $t$-cubes, with $|\tilde Y(\tilde\ell)|\gtrapprox \lambda^4 t^2$, so that for each $t$-cube $Q\subset \tilde Y(\tilde\ell)$, we have 
\begin{equation}\label{howFullEachTildeDeltaCubeTypeA}
\begin{split}
|Q \cap E_L| & \geq |Q\cap E_{L_1}| \geq c_{\eps_1}\rho^{\eps_1} K^{-1}\big(\frac{1}{4} |\log\rho|^{-1}\lambda\big)^6   \big( (\rho/t)M \big)|Q|\\
&\geq   c_\eps \rho^{\eps^4/20 + \eps^3} \lambda^6  \big( (\rho/r)^{1/2}M \big)|Q|.
\end{split}
\end{equation}
(In the second line we replaced $c_{\eps_1}$ by $c_\eps$ since $\eps_1$ is a function of $\eps$, and used \eqref{powersOfLogDominatedExponent} to simplify the expression).

\medskip
\noindent{\bf Case (B)}:
If $\tilde\ell$ is of type (B), then there exists a diameter $\tau=\tau(\tilde\ell)\in [\rho^{1-\eps^3/10}, 1]$ and a set $(L^*_{\tilde\ell},Y^*_{\tilde\ell})_{\tau}$ of lines with a $\Omega^*(\frac{1}{4}|\log\rho|^{-1}\lambda)=\Omega^*(\lambda^4)$ dense shading (we use the subscript $\tilde\ell$ to emphasize that the set $L^*$ and the shading $Y^*$ depend on the choice of lines $\tilde\ell$). For each $\ell^*\in L^*_{\tilde\ell}$ and each $\tau$ cube $Q$, we have
\begin{equation}\label{howFullEachTildeDeltaCubeTypeB}
|Q \cap E_L| \geq |Q\cap E_{L_1}|  \geq c_{\eps_1}\rho^{\eps_1} \big(\frac{1}{4} |\log\rho|^{-1}\lambda\big)^6|Q| \geq c_{\eps}\rho^{\eps^4/30}\lambda^6|Q|.
\end{equation}
Finally, $\Omega^*(\#L_4(\tilde \ell))$ of the lines $\ell \in L_4(\tilde \ell)$ satisfy $d(\ell, \ell^*)\leq4\tau$ for some $\ell^*\in L^*_{\tilde\ell}$.

\medskip

\noindent{\bf Step 6: Closing the induction if most lines are of Type (A).}\\
Suppose at least half the lines from $\tilde L$ are of Type (A). Abusing notation slightly, we will refine the set $\tilde L$ to consist only of these lines. Apply Lemma \ref{nonConcentrationScaleLem} with $L_4$ in place of $L$; $\tilde L$ as above; $\rho$ in place of $\delta$; $t$ in place of $\rho$; and $r$ as above. We obtain a set $\tilde L'\subset\tilde L$ that obeys a two-dimensional ball condition and has size
\begin{equation}\label{boundOnCardTildeL}
\#\tilde L' \gtrsim (t/r)(\# \tilde L) \gtrapprox_\rho (\rho/r)^{1/2} N/M,
\end{equation}
where $N$ obeys the bound \eqref{lowerBoundOnN}. 

We can now apply the induction hypothesis at scale  $t$ to $(\tilde L', \tilde Y)$ with $\tilde\lambda = \Omega^*(\lambda^4)$, i.e. $\tilde\lambda\geq c|\log\rho|^{-C_0}\lambda^4$, for some absolute constant $C_0$. We first record the following computation. By \eqref{lowerBoundOnR}, we have $t \geq \rho^{1-\eps},$ and thus by the computation in Remark \ref{boundOnWRemark}, 
\begin{equation}\label{boundOnW}
W(\eps, \delta, t) \leq \frac{1}{10} W(\eps, \delta,\rho).
\end{equation}

Using \eqref{boundOnCardTildeL}, \eqref{boundOnW}, and using \eqref{powersOfLogDominatedExponent} to simplify our expression, our induction hypothesis says the following.
\begin{equation*}
\begin{split}
\Big|\bigcup_{\tilde l\in \tilde L'} \tilde Y(\tilde\ell)\Big| & \geq c_1 \delta^{2\eps}t^\eps \tilde\lambda^{W(\eps, \delta, t)} (t^2\# \tilde L')^{1-c_2}\\
& \gtrapprox_\rho c_1 t^\eps\delta^{2\eps} ( c\lambda^4|\log\rho|^{-C_0})^{\frac{1}{10}W(\eps, \delta,\rho)} \big(t^2  (\rho/r)^{1/2} N/M\big)^{1-c_2}\\
& \geq \Big( \rho^{-\eps^2}(c|\log\rho|^{-C_0})^{\exp(100/\eps^3)}   \Big) c_1 \rho^\eps \delta^{2\eps}\lambda^{\frac{1}{2}W(\eps,\delta,\rho)}  \big(t^2(\rho/r)^{1/2} N/M\big)^{1-c_2}.
\end{split}
\end{equation*}
If $c_1$ is chosen sufficiently small (depending on $\eps$ and $c$), then this forces $\rho>0$ to be sufficiently small that the first term in brackets is bounded below by $\rho^{-\eps^2/2}$, i.e.
\begin{equation}\label{volumeScaleTildeRho}
\Big|\bigcup_{\tilde l\in \tilde L'} \tilde Y(\tilde\ell)\Big|  \geq  \rho^{-\eps^2/2} c_1 \rho^\eps \delta^{2\eps}\lambda^{\frac{1}{2}W(\eps,\delta,\rho)}  \big(t^2(\rho/r)^{1/2} N/M\big)^{1-c_2}.
\end{equation}

Recall that the set on the LHS of \eqref{volumeScaleTildeRho} is a union of $t$ cubes, and \eqref{volumeScaleTildeRho} gives us a lower bound on the volume of the union of these cubes. On the other hand, \eqref{howFullEachTildeDeltaCubeTypeA} gives a lower bound for the size of the intersection of each of these cubes with $E_L$. By combining these inequalities, we obtain a lower bound on $E_L$. The computation is as follows. 
\begin{equation}
\begin{split}
|E_L| & \gtrapprox_\rho
\Big[ c_\eps \rho^{\eps^4/20 + \eps^3} \lambda^6  \big( (\rho/r)^{1/2}M\Big]
\Big[\rho^{-\eps^2/2} c_1 \rho^\eps \delta^{2\eps}\lambda^{\frac{1}{2}W(\eps,\delta,\rho)}  \big(t^2(\rho/r)^{1/2}  N/M\big)^{1-c_2}\Big]\\
& \geq \Big( c_\eps\rho^{\eps_1+3c_2+\eps^3-\eps^2/2}   \Big) \big(c_1 \rho^\eps \delta^{2\eps}\lambda^{W(\eps,\delta,\rho)}\big) \big(\rho^{2} \#L   \big)^{1-c_2},
\end{split}
\end{equation}
where we used \eqref{lowerBoundOnN} to replace $N$ by $\rho^{2c_2}(\#L)$ and $t=(\rho r)^{1/2}$. Since $\eps_1<\eps^4/40$, if we select $c_2 < \eps^2/100$ and if $c_1(\eps)>0$ is sufficiently small, which in turn makes $\rho>0$ sufficiently small, then the first term in brackets is larger than $|\rho\delta|^{-C_1}$, where $C_1$ is the implicit power of $|\log\rho|$ in the above quasi-inequality. Thus the induction closes. This concludes the proof of Proposition \ref{mainProp} if at least half the lines from $\tilde L$ are of Type (A).

\medskip

\noindent{\bf Step 7: Closing the induction if most lines are of Type (B).}\\
Suppose at least half the lines from $\tilde L$ are of Type (B). By pigeonholing, we can select a popular diameter $\tau\in [\rho^{1-\eps^3/10}, 2r]$ and define 
\[
\tilde L_5=\{\tilde\ell\in \tilde L\colon \tau(\tilde\ell)\in [\tau/2,\tau]\},\quad L_5^*=\bigcup_{\tilde\ell\in \tilde L_5}L^*_{\tilde\ell}.
\]
Define the shading $Y_5^*$ on $L_5^*$ in the natural way, i.e.~$Y_5^*(\ell^*) = Y^*_{\tilde\ell}(\ell^*)$, where $\ell^*\in L^*_{\tilde\ell}$. Finally, define 
\[
L_5=\bigcup_{\tilde\ell\in \tilde L_5}\big\{\ell\in L_4(\tilde \ell)\colon d(\ell, \ell^*)\leq \tau \quad\textrm{for some}\ \ell^*\in L^*_{\tilde\ell} \big\}.
\]
We have $\#L_5\gtrapprox_\rho \#L_4.$ Next, apply Lemma \ref{nonConcentrationScaleLem} with $L_5$ in place of $L$; $L_5^*$ in place of $\tilde L$; $\rho$ in place of $\delta$; $\tau$ in pace of $\tilde\delta$; $r$ as above; and $\eps_1 = \eps^4/40$ in place of $\eps$. We can verify that the hypotheses of Part (ii) of the lemma are satisfied: since $L$ obeys a two-dimensional ball condition, so does $L_5$. Furthermore $L_5\subset\bigcup_{\ell^*\in L_5^*}B(\ell^*,\rho^*)$ by construction. 

We obtain a sets $L_6$ and $L_6^*$, so that $\#L_6^*$ satisfies a two-dimensional ball condition, and
\begin{equation}
\#L_6^* \geq c'_\eps \rho^{\eps_1} (\rho/\tau)^2(\# L_6) \geq c'_\eps\rho^{\eps^4/20}(\rho/\tau)^2\#L,
\end{equation}
where the second inequality made use of the fact that each set $L\supset L_1\supset\ldots\supset L_6$ has comparable size, up to harmless $O^*(1)$ factors that are accounted for by replacing $\rho^{\eps_1}$ with $\rho^{\eps^4/20}$ Let $Y_6^*$ be the restriction of $Y_5^*$ to $L_6^*$.

We will apply the induction hypothesis at scale $\tau$ to $(L_6^*, Y_6^*)_{\tau}$. Since $\tau\geq \rho^{1-\eps^3/10}$, the computation in Remark \ref{boundOnWRemark} shows that $W(\eps,\delta,\tau)\leq\frac{1}{10}W(\eps,\delta,\rho)$. We can now compute
\begin{equation*}
\begin{split}
\Big|\bigcup_{\ell^*\in L_6^*}Y_6^*(\ell^*) \Big|&\geq c_1 [\tau^\eps] \delta^{2\eps}\Big(\frac{1}{4}|\log\rho|^{-1}\lambda\Big)^{W(\eps, \delta, \tau)}\Big({\tau}^2\#L_6^*\Big)^{1-c_2}\\
&\gtrapprox c_1 [\rho^{\eps}\rho^{-\eps^4/10}] \delta^{2\eps}\Big(\frac{1}{4}|\log\rho|^{-1}\lambda\Big)^{W(\eps, \delta, \tau)}\Big( c'_\eps\rho^{\eps^4/20}(\rho^2\#L)\Big)^{1-c_2}\\
&\geq\Big(c'_\eps \rho^{-\eps^4/10+\eps^4/20}|\log\rho|^{-C_0W(\eps,\delta,\rho)} \Big)\Big( c_1\rho^\eps\delta^{2\eps}\lambda^{\frac{1}{2}W(\eps,\delta,\rho)}(\rho^2\#L)^{1-c_2}\Big).
\end{split}
\end{equation*}
Observe that the first term in brackets becomes larger as $\rho\searrow 0$. Selecting $c_1$ sufficiently small, which in turn forces $\rho$ too be sufficiently small, we obtain
\begin{equation}\label{volumeScaleT}
\Big|\bigcup_{\ell^*\in  L_6^*} Y_6^*(\ell^*) \Big|\geq \rho^{-\eps^4/25}\Big( c_1\rho^\eps\delta^{2\eps}\lambda^{\frac{1}{2}W(\eps,\delta,\rho)}(\rho^2\#L)^{1-c_2}\Big).
\end{equation}
The set on the LHS of \eqref{volumeScaleT} is a union of $\tau$-cubes, and \eqref{volumeScaleT} gives us a lower bound on the volume of the union of these cubes. On the other hand, \eqref{howFullEachTildeDeltaCubeTypeB} gives a lower bound for the size of the intersection of each of these cubes with $E_L$. Thus by combining these inequalities, we obtain a lower bound on $E_L$. The computation is as follows. 

\begin{equation}
\begin{split}
|E_L| & \geq
\Big[c_{\eps}\rho^{\eps^4/30}\lambda^6 \Big]
\Big[\rho^{-\eps^4/25}  c_1\rho^\eps\delta^{2\eps}\lambda^{\frac{1}{2}W(\eps,\delta,\rho)}(\rho^2\#L)^{1-c_2} \Big]\\
& = \Big(c_\eps\rho^{\eps^4/30 -\eps^4/25} \Big)\Big(c_1\rho^\eps\delta^{2\eps}\lambda^{W(\eps,\delta,\rho)}(\rho^2\#L)^{1-c_2} \Big).
\end{split}
\end{equation}
If $c_1(\eps)>0$ is sufficiently small, which in turn makes $\rho>0$ sufficiently small, then the first term in brackets is larger than 1 and the induction closes. This concludes the proof of Proposition \ref{mainProp} if at least half the lines from $\tilde L$ are of Type (B).
\end{proof}

\section{Reversing the F\"assler-Orponen argument}\label{reversingFOSec}
In this section, we will briefly summarize some of the key steps in F\"assler and Orponen's proof of Conjecture \ref{sl2Conj} from \cite{FO}, and explain how this proof can be reversed to show that Theorem \ref{mainThm} implies Theorem \ref{GGGHMWThm}. The main steps from the F\"assler-Orponen argument are shown in Figure \ref{FOPic}.

\begin{figure}[h]
\centering
\begin{overpic}[ scale=0.5]{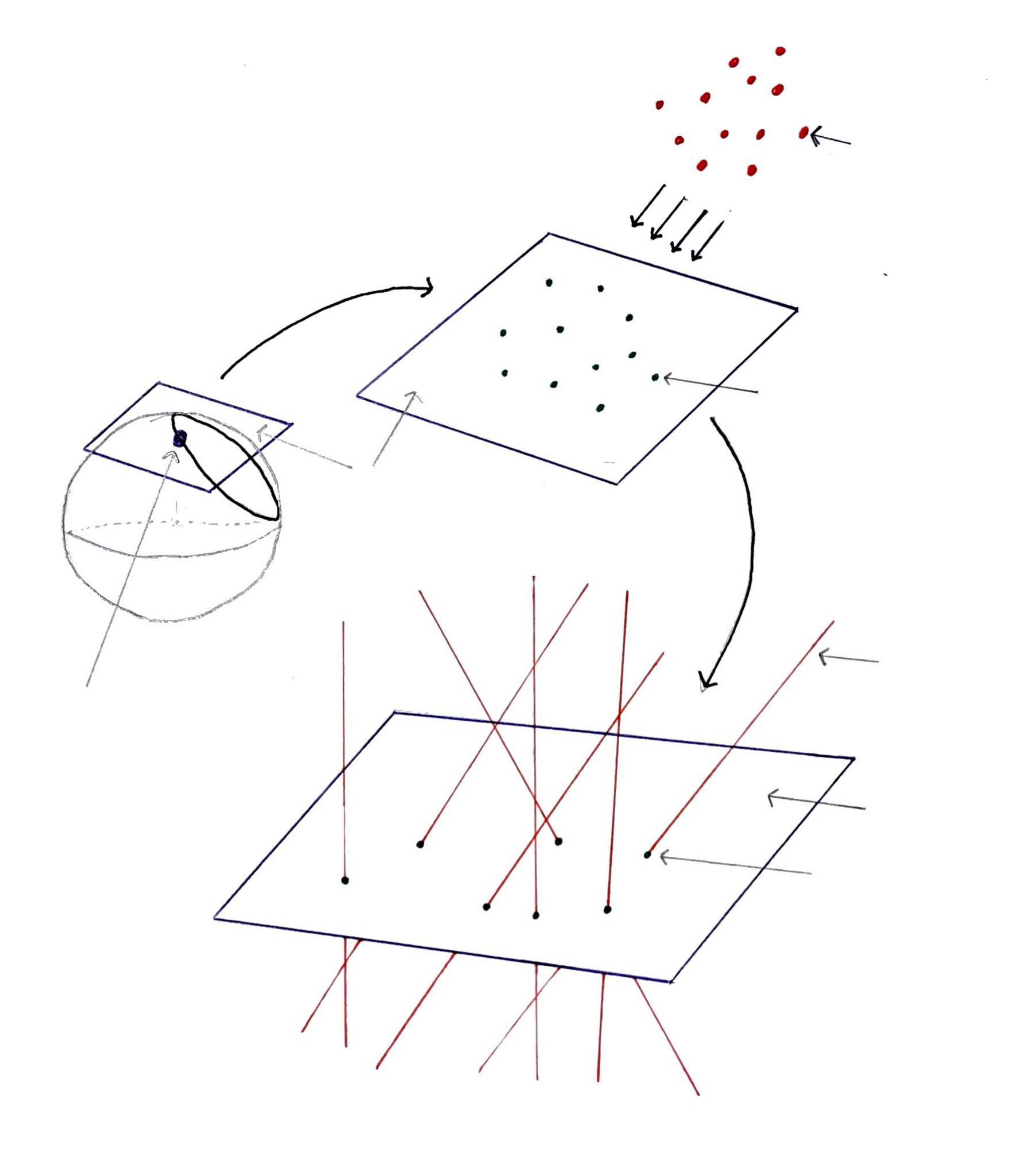}
 \put (6,38) {$\gamma(\theta)$}
 \put (31,57) {$\gamma(\theta)^\perp$}
 \put (74,86) {$p\in E$}
 \put (62,78) {{\footnotesize projection onto $\gamma(\theta)^\perp$}}
 \put (66,65) {$\pi_\theta(p)$}
 \put (67,56) {{\small $\gamma(\theta)^\perp\to H_\theta$} }
 \put (67,52) {{\small $q\mapsto (A_\theta q, \theta)$} }
 \put (76,41) {$\ell^p$}
 \put (75,28) {$H_\theta$}
 \put (71,23) {$\ell^p\cap H_\theta$}
\end{overpic}
\caption{The F\"assler-Orponen reduction}
\label{FOPic}
\end{figure}

We will briefly expand on a few items from Figure \ref{FOPic}. Define
\begin{equation}\label{defnGammaTheta}
\gamma(\theta) = \frac{(1,-\theta,\ \theta^2/2)}{\Vert (1,-\theta,\ \theta^2/2)\Vert},\quad \pi_\theta(p) = p- \big(p\cdot\gamma(\theta)\big)\gamma(\theta).
\end{equation}
The curve, $\gamma\colon [0,1]\to S^2$ parameterizes part of the right-angled light cone pointing in the direction $\big(\frac{1}{\sqrt 2}, 0, \frac{1}{\sqrt 2}\big)$ (note that this is not a unit speed parameterization).  

For each $\theta\in [0,1],$ let $A_\theta\colon \gamma(\theta)^\perp\to\RR^2$ be given by 
\[
A_\theta(q) = \big(q\cdot (\theta, 1, 0),\ q\cdot(0, \theta/2, 1)\big).
\]
Observe that $A_\theta$ and $A_\theta^{-1}$ are $100$-Lipschitz for all $\theta\in [0,1]$. For $p=(p_1,p_2,p_3)\in\RR^3$, define the line (note that $ad-bc=1$)
\begin{equation}
\begin{split}
&\ell^p = \ell_{(a,b,c,d)},\\ 
a = p_3-2p_1p_2/p_3,\quad &b= -2p_2/p_3,\quad c=p_1/p_3,\quad d=1/p_3.
\end{split}
\end{equation}
The map $p\mapsto\ell^p$ is Lipschitz on compact subsets of $\RR^3\backslash\{p_3=0\}$. A key step in the F\"assler-Orponen proof is the observation that 
\begin{equation}\label{AthetaTheta}
(A_\theta(\pi_\theta(p)), \theta) = \ell^p\cap H_\theta,\quad p\in\RR^3\backslash\{p_3=0\},\ \theta\in[0,1].
\end{equation}

We will exploit \eqref{AthetaTheta} as follows. Let $P\subset\RR^3$ and let $G\subset P \times [0,1]$. Define the slices
\[
G_\theta = \{p\in P\colon (p,\theta)\in G\},\quad G^p = \{\theta\in [0,1]\colon (p,\theta)\in G\}.
\]

For each $p\in P$, define $Y(\ell^p)$ to be the union of $\delta$-cubes that intersect the set 
\begin{equation}\label{defineShadingY}
\{ A_\theta(\pi_\theta(p)), \theta) \colon \theta\in G^p \}.
\end{equation}

By \eqref{AthetaTheta}, $Y(\ell^p)$ is a shading of $\ell^p$, i.e. each $\delta$-cube in $Y(\ell^p)$ intersects $\ell^p$. Furthermore, the shading $Y(\ell^p)$ has density $\Omega(\mathcal{E}_{\delta}( G^p))$, where $\mathcal{E}_{\delta}(X)$ denotes the $\delta$ covering number of the set $X$.  With these definitions, we can state the main result of this section. In what follows, $\gamma(\theta)$ and $\pi_\theta$ are as specified in \eqref{defnGammaTheta}.

\begin{prop}\label{projMomentCurve}
For all $\eps>0$, $R\geq 1$, there exists $M=M(\eps)$ and $\delta_0=\delta_0(\eps,R)$ so that the following holds for all $\delta\in(0,\delta_0]$. Let $P\subset \{p\in \RR^3\colon |p|\leq R, |p_3|\geq 1/R\}$, and suppose $P$ satisfies the ball condition 
\begin{equation}\label{ballConditionForP}
\mathcal{E}_\delta(P \cap B) \leq (r/\delta)^2\quad\textrm{for all}\ r\in[\delta,R],\ \textrm{and all balls}\ B\ \textrm{of radius}\ r.
\end{equation}

Let $G\subset P\times[0,1]$, with $\mathcal{E}_\delta(G) \geq \lambda \delta^{-1}\mathcal{E}_\delta(P)$. Then there exists $\theta\in[0,1]$ so that
\begin{equation}\label{largeProjectionTheta}
\mathcal{E}_{\delta} (\pi_\theta(G_\theta))\geq \delta^{\eps}\lambda^M \mathcal{E}_{\delta}(P).
\end{equation}
\end{prop}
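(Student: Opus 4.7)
The plan is to reverse the F\"assler--Orponen reduction summarized in Figure \ref{FOPic}: from the data $(P,G)$ I will build an arrangement of lines in $\mathcal L_{SL_2}$ together with a dense shading, apply Theorem \ref{mainThm} to bound the total thickened volume from below, and then slice in the $z$-direction and pigeonhole to extract a single angle $\theta$ for which $\pi_\theta(G_\theta)$ is large.

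For the setup, let $P_\delta\subset P$ be a maximal $\delta$-separated subset, so $\#P_\delta\sim\mathcal E_\delta(P)$, and set $L=\{\ell^p:p\in P_\delta\}$. On the set $\{|p|\le R,\,|p_3|\ge 1/R\}$ the map $p\mapsto\ell^p$ is a smooth diffeomorphism onto its image in $\mathcal L_{SL_2}$, with Lipschitz constants depending only on $R$; consequently $L$ is $\Omega(\delta)$-separated, lies in $B(0,R')\subset\RR^4$ for some $R'=R'(R)$, and inherits the two-dimensional ball condition \eqref{ballCondition} from the hypothesis \eqref{ballConditionForP} on $P$. I define the shading $Y(\ell^p)$ to be the union of grid-aligned $\delta$-cubes that meet the set \eqref{defineShadingY}; the identity \eqref{AthetaTheta} says this is a bona fide shading of $\ell^p$, and since the direction of $\ell^p$ has unit $z$-component, $|\ell^p\cap Y(\ell^p)|\gtrsim\delta\,\mathcal E_\delta(G^p)$.

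The hypothesis $\mathcal E_\delta(G)\gtrsim\lambda\delta^{-1}\mathcal E_\delta(P)$ distributes as $\sum_{p\in P_\delta}\mathcal E_\delta(G^p)\gtrsim\lambda\delta^{-1}\#P_\delta$, so dyadic pigeonholing yields $L'\subset L$ with $\#L'\gtrapprox\#L$ and $|\ell\cap E_{L'}|\gtrapprox\lambda$ uniformly on $L'$. Theorem \ref{mainThm}, applied with $\eps/2$ in place of $\eps$, then gives $|E_{L'}|\gtrsim\delta^{\eps/2}\lambda^M\delta^2\mathcal E_\delta(P)$ for some $M=M(\eps)$. I next partition $E_{L'}$ into the $\sim\delta^{-1}$ horizontal $\delta$-slabs $\{\theta\le z<\theta+\delta\}$ and pigeonhole to obtain $\theta\in[0,1]$ with $|E_{L'}\cap\{\theta\le z<\theta+\delta\}|\gtrsim\delta|E_{L'}|$; the $xy$-footprint of this slab therefore has area $\gtrsim|E_{L'}|$. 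By construction, each $\delta$-cube of the slab is centered near a point $(A_{\theta'}(\pi_{\theta'}(p)),\theta')$ with $(p,\theta')\in G$ and $|\theta'-\theta|\le\delta$; since $(A_\theta,\pi_\theta)$ depend Lipschitz continuously on $\theta$ on the relevant compact set, the footprint is contained in the $O(\delta)$-neighborhood of $A_\theta(\pi_\theta(G_\theta))$, and the uniform bilipschitz property of $A_\theta$ produces $\mathcal E_\delta(\pi_\theta(G_\theta))\gtrsim\delta^{-2}|E_{L'}|$. Chaining the inequalities yields $\mathcal E_\delta(\pi_\theta(G_\theta))\gtrsim\delta^{\eps/2}\lambda^M\mathcal E_\delta(P)\ge\delta^{\eps}\lambda^M\mathcal E_\delta(P)$, which is \eqref{largeProjectionTheta}.

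The step I expect to be most delicate is verifying that $L$ inherits the two-dimensional ball condition with constants independent of $\delta$: the change of variables $p\mapsto\ell^p$ is singular at $\{p_3=0\}$, and although the hypothesis $|p_3|\ge 1/R$ ensures the bilipschitz constants are finite, one must check that the resulting $R$-dependence can be absorbed into the function $\delta_0(\eps,R)$ of Theorem \ref{mainThm} without contaminating the exponents on $\delta$ or $\lambda$. A secondary, more routine issue is the careful bookkeeping of the $|\log\delta|^{O(1)}$ losses from the several dyadic pigeonholing steps so that they are absorbed into the single $\delta^{\eps}$ factor in the final bound.
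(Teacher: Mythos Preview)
Your proposal is correct and follows essentially the same route as the paper: build $L=\{\ell^p\}$ via the bilipschitz map $p\mapsto\ell^p$, define shadings through \eqref{defineShadingY}, apply Theorem \ref{mainThm}, and then slice in the $z$-direction (the paper phrases this last step as ``Fubini''). The delicate bilipschitz point you flag is handled in the paper exactly as you suggest---the constants depend only on $R$ and are absorbed into $\delta_0(\eps,R)$---and the one small omission in your write-up is the reduction to $G$ being a union of grid-aligned $\delta$-cubes, which the paper makes explicit and which cleanly resolves the $G_\theta$ versus $G_{\theta'}$ identification in your slicing step.
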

Proposition \ref{projMomentCurve} implies Theorem \ref{GGGHMWThm} by standard discretization arguments. We will remark briefly on how Proposition \ref{projMomentCurve} follows from Theorem \ref{mainThm}. First, we may suppose that $G$ is a union of grid-aligned $\delta$-cubes. Let $L$ be a maximal $\delta$-separated subset of $\{\ell^p\colon p\in P\}$, so \eqref{ballConditionForP} implies that $L\subset \mathcal{L}_{SL_2}$ satisfies \eqref{ballCondition}. For each $\ell=\ell^p\in L$, we define the shading $Y(\ell)$ as above (see \eqref{defineShadingY}). After a harmless refinement by a factor of $O^*(1)$, we may suppose that each shading $Y(\ell)$ has density $\Omega^*(\lambda)$. By Theorem  \ref{mainThm} and Fubini, there is a choice of $\theta\in [0,1]$ so that
\[
\Big| \{z=\theta\}\cap \bigcup_{\ell\in L}Y(\ell) \Big| \geq \delta^{\eps}\lambda^M(\delta^2\# L),
\]
but this is precisely \eqref{largeProjectionTheta}.


\end{document}